\documentclass[10pt, reqno]{amsart}

\usepackage{amsfonts}
\usepackage{amsmath}
\usepackage{amssymb}
\usepackage{amscd}
\usepackage[mathscr]{eucal}
\usepackage{url}

\allowdisplaybreaks[1]


\newcommand{\ph}[2]{{\left({#1}\right)}_{#2}}


\renewcommand*{\bar}{\overline}
\newcommand{\gfp}[1]{\Gamma_p{\left({#1}\right)}}
\newcommand{\biggfp}[1]{\Gamma_p{\bigl({#1}\bigr)}}
\newcommand{\Biggfp}[1]{\Gamma_p{\Bigl({#1}\Bigr)}}




\theoremstyle{plain}

\newtheorem{theorem}{Theorem}[section]
\newtheorem{lemma}[theorem]{Lemma}
\newtheorem{prop}[theorem]{Proposition}
\newtheorem{cor}[theorem]{Corollary}
\theoremstyle{definition}

\newtheorem{conj}[theorem]{Conjecture}

\numberwithin{equation}{section}


\begin{document}

\title[On a supercongruence conjecture of Rodriguez-Villegas]{On a supercongruence conjecture of Rodriguez-Villegas}
\author{Dermot M\lowercase{c}Carthy}  

\address{Department of Mathematics, Texas A\&M University, College Station, TX 77843-3368, USA}

\email{mccarthy@math.tamu.edu}


\subjclass[2010]{Primary: 11F33; Secondary: 33C20, 11T24}


\begin{abstract}
In examining the relationship between the number of points over $\mathbb{F}_p$ on certain Calabi-Yau manifolds and hypergeometric series which correspond to a particular period of the manifold, Rodriguez-Villegas identified numerically 22 possible supercongruences. We prove one of the outstanding supercongruence conjectures between a special value of a truncated generalized hypergeometric series and the $p$-th Fourier coefficient of a modular form. 
\end{abstract}

\maketitle

\section{Introduction and Statement of Results}
\makeatletter{\renewcommand*{\@makefnmark}{}
\footnotetext{This work was supported by the UCD Ad Astra Research Scholarship program.}
Let $\mathbb{F}_{p}$ denote the finite field with $p$, a prime, elements. In \cite{R} Rodriguez-Villegas examined the relationship between the number of points over $\mathbb{F}_p$ on certain Calabi-Yau manifolds and truncated generalized hypergeometric series which correspond to a particular period of the manifold. In doing so, he identified numerically 22 possible supercongruences which can be categorized by the dimension, $D$, of the manifold as outlined below.

We first define the truncated generalized hypergeometric series. For a complex number $a$ and a non-negative integer $n$ let $\ph{a}{n}$ denote the rising factorial defined by
\begin{equation*}\label{RisFact}
\ph{a}{0}:=1 \quad \textup{and} \quad \ph{a}{n} := a(a+1)(a+2)\dotsm(a+n-1) \textup{ for } n>0.
\end{equation*}
Then, for complex numbers $a_i$, $b_j$ and $z$, with none of the $b_j$ being negative integers or zero, we define the truncated generalized hypergeometric series
\begin{equation*}
{{_rF_s} \left[ \begin{array}{ccccc} a_1, & a_2, & a_3, & \dotsc, & a_r \vspace{.05in}\\
\phantom{a_1} & b_1, & b_2, & \dotsc, & b_s \end{array}
\Big| \; z \right]}_{m}
:=\sum^{m}_{n=0}
\frac{\ph{a_1}{n} \ph{a_2}{n} \ph{a_3}{n} \dotsm \ph{a_r}{n}}
{\ph{b_1}{n} \ph{b_2}{n} \dotsm \ph{b_s}{n}}
\; \frac{z^n}{{n!}}.
\end{equation*}
We also let $\phi(\cdot)$ denote Euler's totient function and $\left(\frac{\cdot}{p}\right)$ the Legendre symbol modulo $p$.

For $D=1$, associated to certain elliptic curves, 4 supercongruences were identified. They were all of the form 
\begin{equation*}
{_{2}F_1} \Biggl[ \begin{array}{cc} \frac{1}{d}, & 1-\frac{1}{d}\vspace{.05in}\\
\phantom{\frac{1}{d}} & 1 \end{array}
\bigg| \; 1 \Biggr]_{p-1}
\equiv
\left(\frac{-t}{p}\right)
\pmod{p^2},
\end{equation*}
where $\phi(d) \leq 2$, $1\leq t \leq3$ and $p$ is a prime not dividing $d$. These cases have been proven by Mortenson  \cite{M1}, \cite{M2}.

For $D=2$ another 4 supercongruences were identified which relate to certain modular K3 surfaces. These were all of the form 
\begin{equation*}
{_{3}F_2} \Biggl[ \begin{array}{ccc} \frac{1}{2}, & \frac{1}{d}, & 1-\frac{1}{d}\vspace{.05in}\\
\phantom{\frac{1}{d}} & 1, &1 \end{array}
\bigg| \; 1 \Biggr]_{p-1}
\equiv
a(p)
\pmod{p^2},
\end{equation*}
where $\phi(d) \leq 2$, $p$ is a prime not dividing $d$ and $a(p)$ is the $p$-th Fourier coefficient of a weight three modular form on a congruence subgroup of $SL(2,\mathbb{Z})$. The case when $d=2$ was originally conjectured by Beukers and Stienstra \cite{BS} and was first proven by Van Hamme \cite{VH}. Subsequently, proofs were also provided by Ishikawa \cite{I} and Ahlgren \cite{A}. The other $D=2$ cases are dealt with by Mortenson \cite{M} where they have been proven for $p\equiv 1 \pmod d$ and up to sign otherwise. 

The remaining 14 supercongruence conjectures relate to Calabi-Yau threefolds (i.e. $D=3$). The threefolds in question are complete intersections of hypersurfaces, of which 13 are discussed by Batyrev and van Straten in \cite{BvS}. The supercongruences can be expressed as either
\begin{equation*}
{_{4}F_3} \Biggl[ \begin{array}{cccc} \frac{1}{d_1}, & 1-\frac{1}{d_1}, & \frac{1}{d_2}, & 1-\frac{1}{d_2}\vspace{.05in}\\
\phantom{\frac{1}{d_1}} & 1, & 1, & 1 \end{array}
\bigg| \; 1 \Biggr]_{p-1}
\equiv
b(p)
\pmod{p^3},
\end{equation*}
where $\phi(d_i) \leq 2$ and $p$ is a prime not dividing $d_i$, or
\begin{equation}\label{type2}
{_{4}F_3} \Biggl[ \begin{array}{cccc} \frac{1}{d}, & \frac{r}{d}, & 1-\frac{r}{d}, & 1-\frac{1}{d}\vspace{.05in}\\
\phantom{\frac{1}{d_1}} & 1, & 1, & 1 \end{array}
\bigg| \; 1 \Biggr]_{p-1}
\equiv
b(p)
\pmod{p^3},
\end{equation}
where $\phi(d) =4$, $1<r<d-1$ with $\gcd(r,d)=1$, $p$ is a prime not dividing $d$ and $b(p)$ is the $p$-th Fourier coefficient of a weight four modular form on a congruence subgroup of $SL(2,\mathbb{Z})$. Only one of these cases has been proven (Kilbourn \cite{K}). It is of the first type with $d_1=d_2=2$ and is an extension of the Ap\'ery number supercongruence \cite{AO}.

Let 
\begin{equation}\label{for_ModForm}
f(z):= f_1(z)+5f_2(z)+20f_3(z)+25f_4(z)+25f_5(z)=\sum_{n=1}^{\infty} c(n) q^n
\end{equation}
where $f_i(z):=\eta^{5-i}(z) \hspace{2pt} \eta^4(5z) \hspace{2pt} \eta^{i-1}(25z)$,
$\eta(z):=q^{\frac{1}{24}} \prod_{n=1}^{\infty}(1-q^n)$ is the Dedekind eta function and $q:=e^{2 \pi i z}$. Then $f$ is a cusp form of weight four on the congruence subgroup $\Gamma_0(25)$.  We now list one of the outstanding conjectures of type (\ref{type2}).
\begin{conj}[Rodriguez-Villegas \cite{R}]\label{conj_RV} If $p \neq 5$ is prime and $c(p)$ is as defined in (\ref{for_ModForm}), then 
\begin{equation*}
{_4F_3} \Biggl[ \begin{array}{cccc} \frac{1}{5}, & \frac{2}{5}, & \frac{3}{5}, & \frac{4}{5} \\
\phantom{\frac{1}{5},} & 1, & 1, & 1\end{array}
\bigg| \; 1 \Biggr]_{p-1}
\equiv c(p) \pmod {p^3}.
\end{equation*}
\end{conj}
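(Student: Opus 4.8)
The plan is to reduce the conjecture to a supercongruence for a single factorial sum and then to interpret that sum, modulo $p^3$, as a value of a $p$-adic hypergeometric function that computes the trace of Frobenius on the associated Calabi--Yau threefold. First I would put the truncated series into closed form. Using the Gauss multiplication identity $\ph{1}{5n}=5^{5n}\,\ph{\frac15}{n}\ph{\frac25}{n}\ph{\frac35}{n}\ph{\frac45}{n}\,\ph{1}{n}$, together with $\ph{1}{n}=n!$, the general term collapses:
\begin{equation*}
\frac{\ph{\frac15}{n}\ph{\frac25}{n}\ph{\frac35}{n}\ph{\frac45}{n}}{\ph{1}{n}^{3}}\,\frac{1}{n!}=\frac{(5n)!}{5^{5n}\,(n!)^5},
\end{equation*}
so that Conjecture \ref{conj_RV} is equivalent to
\begin{equation*}
\sum_{n=0}^{p-1}\frac{(5n)!}{5^{5n}\,(n!)^5}\equiv c(p)\pmod{p^3}.
\end{equation*}

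The main step is to pass from this truncated sum to a $p$-adic object valid for every prime $p\neq5$. Here I would use the hypergeometric function built from the Morita $p$-adic gamma function $\gfp{\cdot}$ and Teichm\"uller characters, rather than Greene's finite-field hypergeometric functions. The decisive advantage is uniformity: Greene's functions require a character of order $5$ in $\widehat{\mathbb{F}_p^{\times}}$, which exists only when $p\equiv1\pmod5$, whereas the $p$-adic formulation treats all residue classes at once (this is precisely the feature that lets one go beyond the partial results available for the other type-\eqref{type2} cases, and it mirrors the strategy that succeeded for the $d_1=d_2=2$ case in \cite{K}). Concretely I would establish a congruence of the shape
\begin{equation*}
\sum_{n=0}^{p-1}\frac{(5n)!}{5^{5n}\,(n!)^5}\equiv {}_4G_4\left[\begin{array}{cccc}\frac15, & \frac25, & \frac35, & \frac45\\ 0, & 0, & 0, & 0\end{array}\ \Big|\ 1\right]_p\pmod{p^3},
\end{equation*}
by writing each factorial ratio in terms of $\gfp{\cdot}$, applying the Gross--Koblitz formula to convert the resulting Gauss sums into products of $p$-adic gamma values, and expanding those values to second order so as to control the error to modulus $p^3$; the terms discarded when truncating at $n=p-1$ must be shown to contribute $0\pmod{p^3}$.

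Finally I would identify the right-hand side with $c(p)$. The function ${}_4G_4[\cdots]_p$ evaluates, up to explicit elementary factors, the trace of Frobenius on the relevant two-dimensional piece of the middle cohomology of the rigid Calabi--Yau threefold attached to these hypergeometric data. Invoking the known modularity of that threefold---its associated Galois representation corresponds to the weight-four newform $f$ of level $25$ in \eqref{for_ModForm}---shows that this trace equals $c(p)$. Chaining the three displays then yields the asserted congruence modulo $p^3$.

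I expect the second paragraph to be the principal obstacle. Obtaining the congruence to modulus $p^3$, rather than merely $p$ or $p^2$, forces a delicate second-order $p$-adic analysis of the gamma factors and a precise bookkeeping of the truncation error; it is exactly here that the weight-four, level-$25$ nature of the problem asserts itself, since the deeper $p$-adic valuation one must reach is what distinguishes a genuine supercongruence from the shallower congruences that follow more routinely.
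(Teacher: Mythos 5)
Your proposal follows the same two-step architecture as the paper: (a) relate the truncated series, modulo $p^3$, to McCarthy's $p$-adic hypergeometric function (your ${_4G_4}$ with lower parameters $0$ and argument $1$ is literally the paper's ${_4G}\bigl(\tfrac15,\tfrac25,\tfrac35,\tfrac45\bigr)_p$ of (\ref{for_GFn})), and (b) identify that $p$-adic quantity with $c(p)$ via the quintic threefold. The paper cites Theorem \ref{thm_4G2} (= \cite{DMC1} Thm.\ 2.7) for (a) and proves (b) as Theorem \ref{cor_GStoMod}. However, both of your intermediate claims are false as stated, and by the same margin: they omit the correction term $s(p)\,p$, where $s(p):=\gfp{\tfrac15}\gfp{\tfrac25}\gfp{\tfrac35}\gfp{\tfrac45}=\pm1$. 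The correct statements are
\begin{equation*}
\sum_{n=0}^{p-1}\frac{(5n)!}{5^{5n}(n!)^5}\;\equiv\;{_4G}\bigl(\tfrac15,\tfrac25,\tfrac35,\tfrac45\bigr)_p-s(p)\,p \pmod{p^3}
\qquad\text{and}\qquad
{_4G}\bigl(\tfrac15,\tfrac25,\tfrac35,\tfrac45\bigr)_p-s(p)\,p=c(p),
\end{equation*}
so the $G$-function is \emph{not} congruent to the truncated sum mod $p^3$, and it does \emph{not} equal the trace of Frobenius; each statement is off by $\pm p$. Your two errors cancel when composed, which is precisely the symptom of not having carried out either step: any correct execution of (a) or (b) must produce and track this term, and your "up to explicit elementary factors" hedge in step three contradicts the exact congruence you display in step two.

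Beyond that, the two steps are much heavier than your sketch suggests. Step (b) is not a soft appeal to modularity: even granting Schoen's theorem (recorded in \cite{Me} as an explicit formula for $c(p)$ in terms of the point count $N_p$, with elementary coefficients $25p^2-100p$, $p^2$, $p^2+2p$ that vary with $p \bmod 5$), one must still prove that ${_4G}$ computes $N_p$ up to exactly those terms. That computation is the bulk of the paper: Gross--Koblitz to convert ${_4G}$ into Gauss sums, then Koblitz's point-count formula together with Hasse--Davenport and new Jacobi-sum lemmas when $p\equiv1\pmod5$, and a separate additive-character count when $p\not\equiv1\pmod5$ --- so although the $G$-function is defined uniformly in $p$, its evaluation against $N_p$ still splits into residue classes. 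Step (a), which you propose to dispatch by "expanding to second order," is an entire paper's worth of work in \cite{DMC1}; note also that Gross--Koblitz is not the relevant tool there (no Gauss sums appear on either side of (a)) --- it enters in step (b).
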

\noindent The main result of this paper is the following theorem.
\begin{theorem}\label{thm_DMCMain} Conjecture \ref{conj_RV} is true.
\end{theorem}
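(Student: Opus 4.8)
The plan is to pass from the truncated hypergeometric series, an expression in rational-parameter Pochhammer symbols, to a $p$-adic hypergeometric function built from Morita's $p$-adic Gamma function $\Gamma_p$, and then to identify that object with the Frobenius trace on the Calabi--Yau threefold whose period is computed by the $_4F_3$. Since $\ph{1}{n}=n!$, the truncated series is
\begin{equation*}
\sum_{n=0}^{p-1}\frac{\ph{\frac15}{n}\ph{\frac25}{n}\ph{\frac35}{n}\ph{\frac45}{n}}{(n!)^{4}},
\end{equation*}
and the first goal is to rewrite each quotient $\ph{k/5}{n}/n!$ in terms of values of $\Gamma_p$. The bridge is the elementary identity $\ph{a}{n}=(-1)^{n}\,\Gamma_p(a+n)/\Gamma_p(a)$, valid whenever no factor $a+j$ is divisible by $p$, together with a correction power of $p$ in the ranges where such divisibility does occur, dictated by the functional equation of $\Gamma_p$ (the "$-1$" case). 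I would work with a function of McCarthy's $_nG_n$ type --- here a ${}_4G_4[\,\cdots\,|\,1]$ --- precisely because it is assembled from $\Gamma_p$ and Teichm\"uller characters, hence is defined for \emph{every} prime $p\ne 5$; this is essential since Conjecture~\ref{conj_RV} imposes no congruence restriction on $p$, whereas character-based (Greene-style) hypergeometric functions would only be available for $p\equiv 1\pmod 5$.

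First I would fix the residue of $p$ modulo $5$ and split the range $0\le n\le p-1$ according to where the numerator factors $k/5+j$ become $p$-adic units versus multiples of $p$; the latter produce the powers of $p$ responsible for the congruence holding modulo $p^{3}$ rather than merely modulo $p$. On each piece I would replace the rational-parameter Pochhammer symbols by $\Gamma_p$-quotients and expand $\Gamma_p$ to second order about the relevant Teichm\"uller representatives, using
\begin{equation*}
\log\Gamma_p(x+pt)=\log\Gamma_p(x)+pt\,G_1(x)+\tfrac12 (pt)^{2} G_2(x)+O(p^{3})
\end{equation*}
in terms of the $p$-adic digamma-type functions $G_1,G_2$. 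Collecting the resulting finite sum and recognising it as a sum over Teichm\"uller characters, I would show that the truncated series is congruent modulo $p^{3}$ to a specific value of ${}_4G_4$ evaluated at $1$.

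The second main step is to identify that ${}_4G_4$ value with $c(p)$. Here I would invoke the finite-field hypergeometric description of the number of $\mathbb{F}_p$-points on the associated Calabi--Yau threefold $V$: via Gauss-- and Jacobi-sum manipulations together with the Gross--Koblitz formula, the ${}_4G_4$ at $1$ equals, up to elementary factors, the trace of $p$-power Frobenius on the relevant piece of $H^3(V)$. Using the established modularity of $V$ --- the statement that this Frobenius trace is the $p$-th coefficient of the weight-four newform attached to $V$ on $\Gamma_0(25)$ --- together with the explicit eta-product expression \eqref{for_ModForm} for $f$, I would conclude that the ${}_4G_4$ value is exactly $c(p)$, closing the chain of congruences and proving Theorem~\ref{thm_DMCMain}.

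The main obstacle is the first step: obtaining the congruence to the full modulus $p^{3}$. Modulo $p$ the passage to $\Gamma_p$ is essentially routine, but the two extra factors of $p$ force the $\Gamma_p$-expansions to be carried to second order and require controlling, uniformly in $n$, the interaction between these expansions and the sub-ranges where numerator factors vanish modulo $p$. Bounding the error terms, and showing that all the $G_1$- and $G_2$-contributions assemble into the closed ${}_4G_4$ form with nothing surviving beyond $p^{3}$, is the delicate heart of the argument; by comparison the identification with $c(p)$ rests on more standard point-counting and modularity input.
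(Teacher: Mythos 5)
Your proposal follows essentially the same route as the paper: the mod-$p^3$ passage from the truncated ${}_4F_3$ to the $p$-adic ${}_4G$ function is exactly Theorem \ref{thm_4G2} (which the paper imports from \cite{DMC1} rather than re-proving, and whose statement includes the shift $s(p)\,p$ that you subsume under ``elementary factors''), while the identification of that ${}_4G$ value with $c(p)$ via Gauss sums, Gross--Koblitz, point counting on the quintic threefold $x_1^5+\dots+x_5^5-5x_1\dotsm x_5=0$, and Schoen's modularity is precisely the paper's Theorem \ref{cor_GStoMod}. The only detail you omit is the separate check at $p=2$, which is needed because the congruence of Theorem \ref{thm_4G2} applies only to odd primes.
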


The main approach for proving these types of supercongruences has been to use congruence relations between truncated generalized hypergeometric series and Greene's hypergeometric functions over finite fields \cite{A, AO, K, MO, M, M1, M2}. However, as noted in \cite{DMC1}, many results using this approach are restricted to primes in certain congruence classes (e.g. $p\equiv 1 \pmod d$ in some of the $D=2$ cases described above). In \cite{DMC1}, this author extends Greene's hypergeometric functions to the $p$-adic setting and establishes congruences between this new function and certain truncated generalized hypergeometric series. These congruences cover all 22 hypergeometric series outlined above and are valid for all primes required in each of these cases, thus providing a framework for proving all 22 cases. The proof of Theorem \ref{thm_DMCMain} relies on one of these congruences along with counting the number of rational points on a modular Calabi-Yau threefold over $\mathbb{F}_p$. 

Specifically, let $p$ be an odd prime and let $n \in \mathbb{Z}^{+}$. For $1 \leq i \leq n+1$, let $\frac{m_i}{d_i} \in \mathbb{Q} \cap \mathbb{Z}_p$ such that $0<\frac{m_i}{d_i}<1$. 
Let $\gfp{\cdot}$ denote Morita's $p$-adic gamma function, $\left\lfloor x \right\rfloor$ denote the greatest integer less than or equal to $x$ and
$\langle x \rangle$ denote the fractional part of $x$, i.e. $x- \left\lfloor x \right\rfloor$. Then define
\begin{multline}\label{for_GFn}
{_{n+1}G} \left( \tfrac{m_1}{d_1}, \tfrac{m_2}{d_2}, \dotsc, \tfrac{m_{n+1}}{d_{n+1}} \right)_p\\
:= \frac{-1}{p-1}  \sum_{j=0}^{p-2} 
{\left((-1)^j\biggfp{\tfrac{j}{p-1}}\right)}^{n+1} 
\prod_{i=1}^{n+1} \frac{\biggfp{\langle \frac{m_i}{d_i}-\frac{j}{p-1}\rangle}}{\biggfp{\frac{m_i}{d_i}}}
(-p)^{-\lfloor{\frac{m_i}{d_i}-\frac{j}{p-1}\rfloor}}.
\end{multline}

\noindent An example of one the supercongruence results from \cite{DMC1} is the following theorem.
\begin{theorem}[\cite{DMC1} Thm. 2.7]\label{thm_4G2}
Let $r, d \in \mathbb{Z}$ such that $2 \leq r \leq d-2$ and $\gcd(r,d)=1$. Let $p$ be an odd prime such that $p\equiv \pm1 \pmod d$ or $p\equiv \pm r \pmod d$ with $r^2 \equiv \pm1 \pmod d$. If $s(p) := \gfp{\tfrac{1}{d}} \gfp{\tfrac{r}{d}}\gfp{\tfrac{d-r}{d}}\gfp{\tfrac{d-1}{d}}$, then
\begin{multline*}
{_{4}G} \Bigl(\tfrac{1}{d} , \tfrac{r}{d}, 1-\tfrac{r}{d} , 1-\tfrac{1}{d}\Bigr)_p\\
\equiv
{_{4}F_3} \Biggl[ \begin{array}{cccc} \frac{1}{d}, & \frac{r}{d}, & 1-\frac{r}{d}, & 1-\frac{1}{d}\vspace{.05in}\\
\phantom{\frac{1}{d_1}} & 1, & 1, & 1 \end{array}
\bigg| \; 1 \Biggr]_{p-1}
+s(p)\hspace{1pt} p
\pmod {p^3}.
\end{multline*}
\end{theorem}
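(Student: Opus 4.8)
The plan is to express both sides of the asserted congruence through Morita's $p$-adic gamma function and then match them, modulo $p^3$, under a correspondence between the two summation variables. Write $x_1=\tfrac1d,\ x_2=\tfrac rd,\ x_3=1-\tfrac rd,\ x_4=1-\tfrac1d$ for the four parameters. Since the exponent $n+1=4$ is even, the sign $\bigl((-1)^j\bigr)^4$ disappears and each summand of (\ref{for_GFn}) carries the factor $\Gamma_p(\tfrac{j}{p-1})^4$. Because every value of $\Gamma_p$ is a $p$-adic unit, the $p$-adic valuation of the $j$-th summand is read off entirely from the floor terms: as $0<x_i<1$ and $0\le \tfrac{j}{p-1}<1$ we have $\lfloor x_i-\tfrac{j}{p-1}\rfloor\in\{0,-1\}$, equal to $-1$ exactly when $x_i(p-1)<j$, so the valuation equals $N(j):=\#\{i:x_i(p-1)<j\}$. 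Hence modulo $p^3$ only the indices with $N(j)\le2$ survive, and I would also expand the prefactor $\tfrac{-1}{p-1}=1+p+p^2+\cdots$, to be carried to order $p^2$.

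In parallel I would rewrite the truncated series through $\Gamma_p$. For $0\le n\le p-1$ one has $n!=(-1)^{n+1}\Gamma_p(n+1)$, and $\ph{x_i}{n}=(-1)^n\Gamma_p(x_i+n)/\Gamma_p(x_i)$ whenever none of $x_i,\,x_i+1,\dots,x_i+n-1$ lies in $p\mathbb{Z}_p$, with the unique $p$-divisible factor omitted in the remaining case. Writing $k_i\equiv-x_i\pmod p$ with $0\le k_i<p$, the term $\prod_i\ph{x_i}{n}/n!$ thus has valuation $\#\{i:n>k_i\}$, so that, exactly as for the defining sum, only those $n$ with at most two of the $k_i$ below them contribute modulo $p^3$. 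The hypotheses on $p$ — namely $p\equiv\pm1\pmod d$, or $p\equiv\pm r\pmod d$ with $r^2\equiv\pm1\pmod d$ — are precisely the conditions under which multiplication by $p$ permutes the numerator set $\{1,r,d-r,d-1\}$ modulo $d$; this Frobenius-stability keeps all the quantities above in $\mathbb{Z}_p$ and, through the functional equation $\Gamma_p(y+1)=-y^{\,\varepsilon}\Gamma_p(y)$ (with $\varepsilon=1$ if $y\in\mathbb{Z}_p^\times$ and $\varepsilon=0$ if $y\in p\mathbb{Z}_p$), aligns the thresholds $k_i$ with the $x_i(p-1)$ up to the induced permutation, so that on the $F$-side $\Gamma_p(x_i+n)$ matches the normalized factor $\Gamma_p(\langle x_i-\tfrac{j}{p-1}\rangle)(-p)^{-\lfloor x_i-\frac{j}{p-1}\rfloor}$ on the $G$-side. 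This identifies the summands under a correspondence $j\leftrightarrow n$.

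The two sums are then compared block by block in the valuation. The leading unit parts of corresponding terms coincide modulo $p$, and the discrepancies at orders $p$ and $p^2$ are governed by the Taylor (Mahler) expansion $\Gamma_p(a+bp)\equiv\Gamma_p(a)\bigl(1+b\,G_1(a)\,p+\cdots\bigr)$, where $G_1$ is the logarithmic derivative of $\Gamma_p$; I would expand to second order in the valuation-zero block, first order in the valuation-one block, and zeroth order in the valuation-two block. Because the defining sum runs over the $p-1$ indices $j=0,\dots,p-2$ while the truncated series runs over the $p$ indices $n=0,\dots,p-1$, the correspondence leaves a single boundary term unmatched; applying the reflection formula $\Gamma_p(y)\Gamma_p(1-y)=(-1)^{a_0(y)}$ to the pairs $\{\tfrac1d,1-\tfrac1d\}$ and $\{\tfrac rd,1-\tfrac rd\}$, I expect this boundary term to evaluate to $s(p)\,p$, where $s(p)=\Gamma_p(\tfrac1d)\Gamma_p(\tfrac rd)\Gamma_p(\tfrac{d-r}{d})\Gamma_p(\tfrac{d-1}{d})$ collapses to the sign $(-1)^{a_0(1/d)+a_0(r/d)}=\pm1$.

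The hard part will be the cancellation at order $p^2$: once the leading identification and the $s(p)\,p$ boundary term are in place, I must show that every remaining first- and second-order contribution — those coming from $G_1$ across the three blocks, together with the $p$ and $p^2$ terms supplied by $\tfrac{-1}{p-1}$ — annihilates modulo $p^3$. The reflection-closedness of $\{x_i\}$ and the permutation action of $p$ on $\{1,r,d-r,d-1\}$ are exactly what force these logarithmic-derivative contributions to pair off, but organizing this uniformly over the four congruence cases (the two sign choices in $p\equiv\pm1$ and in $p\equiv\pm r$ with $r^2\equiv\pm1$) is the principal technical burden; I would handle each case by recording the permutation $n\mapsto\langle pn\rangle$ it induces on the parameters and verifying the three valuation blocks separately but in parallel.
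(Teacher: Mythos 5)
You should first note a point of provenance: the paper never proves this statement at all --- it is imported verbatim as \cite{DMC1}, Thm.\ 2.7 --- so the only proof to compare against is the one in that reference. Your framework does point in the same general direction as that proof: both sides expressed through $\gfp{\cdot}$, summands sorted into valuation blocks via the floor terms, second-order expansions $\gfp{a+bp}\equiv\gfp{a}(1+b\,G_1(a)\,p+\cdots)$, and the congruence hypotheses on $p$ used exactly as you say, to make multiplication by $p$ permute $\{1,r,d-r,d-1\}$ modulo $d$. But as a proof the proposal has one concrete error and one decisive gap.

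The error is your account of the term $s(p)\,p$. The lone unmatched index under your $j\leftrightarrow n$ correspondence is $n=p-1$ on the $F$-side, and that term is $\ph{\tfrac{1}{d}}{p-1}\ph{\tfrac{r}{d}}{p-1}\ph{1-\tfrac{r}{d}}{p-1}\ph{1-\tfrac{1}{d}}{p-1}/\bigl((p-1)!\bigr)^4$. For $p>d$ each factor $\ph{x_i}{p-1}=\prod_{k=0}^{p-2}(x_i+k)$ contains exactly one multiple of $p$: the zero of $x_i+k$ modulo $p$ sits at $k\equiv -x_i\pmod p$, which lies in $\{1,\dotsc,p-2\}$ because $x_i\not\equiv 0,1 \pmod p$ (the latter would force $p\mid d-m_i$ with $0<d-m_i<d<p$). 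Since $(p-1)!$ is a $p$-adic unit, the boundary term has valuation $4$ and vanishes modulo $p^3$; it cannot produce $s(p)\,p$, whose valuation is exactly $1$. The $s(p)\,p$ discrepancy must instead emerge from \emph{within} the matched range --- from terms of the $G$-sum whose $F$-side partners vanish to higher order --- and locating it requires precisely the block-by-block expansion you postpone. That is also the gap: everything you actually carry out (valuation bookkeeping, unit-part matching, expanding $\tfrac{-1}{p-1}=1+p+p^2+\cdots$) yields only the classical congruence modulo $p$; the assertion modulo $p^3$ \emph{is} the statement that the order-$p$ and order-$p^2$ contributions cancel, and saying that reflection-closedness and the permutation action ``force these contributions to pair off'' is an expectation, not an argument. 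In \cite{DMC1} this step is the bulk of the proof and rests on quantitative lemmas --- expansions of $\gfp{\cdot}$ to second order in terms of $G_1=\Gamma_p'/\Gamma_p$ and $G_2$, together with Kilbourn/McCarthy--Osburn-type congruences for quotients of rising factorials --- and the cancellation occurs only after summing over each block, not termwise. Until you formulate and prove those identities, uniformly in the four congruence classes as you yourself flag, the proposal is a plausible plan rather than a proof.
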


\noindent Taking $d=5$ in Theorem \ref{thm_4G2} yields
\begin{equation*}
{_{4}G} \left(\tfrac{1}{5} ,\hspace{2pt} \tfrac{2}{5},\hspace{2pt} \tfrac{3}{5} ,\hspace{2pt} \tfrac{4}{5}\right)_p
-s(p)\hspace{1pt} p
\equiv
{_{4}F_3} \Biggl[ \begin{array}{cccc} \frac{1}{5}, & \frac{2}{5}, & \frac{3}{5}, & \frac{4}{5}\vspace{.05in}\\
\phantom{\frac{1}{d_1}} & 1, & 1, & 1 \end{array}
\bigg| \; 1 \Biggr]_{p-1}
\pmod {p^3}.
\end{equation*}
Therefore Theorem \ref{thm_DMCMain} will be established (after checking the case when $p=2$) on proving the following theorem.
\begin{theorem}\label{cor_GStoMod}
If $p \neq 5$ is an odd prime, $s(p):= \gfp{\frac{1}{5}} \gfp{\frac{2}{5}} \gfp{\frac{3}{5}} \gfp{\frac{4}{5}}$ and $c(p)$ is as defined in (\ref{for_ModForm}), then 
\begin{align*}
{_{4}G} \left(\tfrac{1}{5} ,\hspace{2pt} \tfrac{2}{5},\hspace{2pt} \tfrac{3}{5} ,\hspace{2pt} \tfrac{4}{5}\right)_p
-s(p)\hspace{1pt} p
&=
c(p).
\end{align*}
\end{theorem}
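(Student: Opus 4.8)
The plan is to realise both quantities as ingredients in the count of $\mathbb{F}_p$-points on a fixed Calabi-Yau threefold $X/\mathbb{Q}$ and to match the two descriptions \emph{exactly}; because Theorem \ref{cor_GStoMod} asserts an equality rather than a congruence, no error term may be discarded and every geometric stratum must be accounted for. I would take $X$ to be the (crepant resolution of the) Calabi-Yau threefold from the Batyrev-van Straten list attached to the data $(\tfrac{1}{5},\tfrac{2}{5},\tfrac{3}{5},\tfrac{4}{5})$, namely the relevant singular fibre of the Dwork mirror-quintic pencil, whose interesting two-dimensional piece $H^3_{\mathrm{int}}$ of middle cohomology is governed by the weight-four newform $f$ on $\Gamma_0(25)$. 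Granting the modularity of $X$ (a result I would cite rather than reprove), the Galois representation on $H^3_{\mathrm{int}}$ satisfies $\mathrm{Tr}(\mathrm{Frob}_p)=c(p)$.

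The first main step is to compute $\#X(\mathbb{F}_p)$ by the additive-multiplicative character method. Expanding the indicator function of the defining equations in additive characters and summing over the torus, the non-degenerate part of the count collapses onto products of Gauss sums $g(\chi)$ whose characters $\chi$ have order dividing $5$, the fifth roots reflecting the parameters $\tfrac{m_i}{5}$. I would then gather the remaining degenerate contributions---the coordinate subspaces and the toric boundary of the resolution---into the trivial Tate classes, whose Frobenius traces form an explicit polynomial $P(p)$ in $p$.

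Next I would convert the Gauss-sum expression into Morita's $p$-adic gamma function via the Gross-Koblitz formula, each sum becoming $-\pi^{a}\,\gfp{\langle \tfrac{a}{p-1}\rangle}$ for a fixed root $\pi$ of $x^{p-1}+p$; the powers of $\pi$ recombine into the factors $(-p)^{-\lfloor \frac{m_i}{d_i}-\frac{j}{p-1}\rfloor}$, and the resulting sum over the index reproduces exactly the defining expression \eqref{for_GFn} for ${}_{4}G(\tfrac{1}{5},\tfrac{2}{5},\tfrac{3}{5},\tfrac{4}{5})_p$. The single extra summand, arising from the term in which the relevant characters degenerate to the trivial one, produces the correction $s(p)\,p$; here $s(p)=\gfp{\tfrac{1}{5}}\gfp{\tfrac{2}{5}}\gfp{\tfrac{3}{5}}\gfp{\tfrac{4}{5}}$ enters through the $\Gamma_p$-normalisations in \eqref{for_GFn} and equals $\pm1$ by the reflection formula for $\gfp{\cdot}$. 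Equating the two evaluations of $\#X(\mathbb{F}_p)$ and cancelling the common polynomial $P(p)$ leaves, up to the sign $(-1)^3$ dictated by the Lefschetz trace formula on $H^3$, the desired identity ${}_{4}G(\tfrac{1}{5},\tfrac{2}{5},\tfrac{3}{5},\tfrac{4}{5})_p-s(p)\,p=c(p)$.

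The step I expect to be hardest is the exact bookkeeping that upgrades approximate matching to a genuine equality: isolating every degenerate stratum of the resolved threefold so that the trivial factors cancel on the nose, fixing the overall sign, and verifying that the computation is uniform across all odd $p\neq5$ irrespective of the residue of $p$ modulo $5$, so that Gauss sums defined over extension fields reassemble correctly into the $\mathbb{F}_p$-trace $c(p)$. Securing the modularity of $X$ with associated form $f$ is the other indispensable ingredient, though I expect it to be available in the literature for this particular threefold rather than something to be established afresh.
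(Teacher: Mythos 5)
Your proposal follows essentially the same route as the paper: invoke the modularity of the $\lambda=1$ fibre of the Dwork quintic pencil (Schoen's threefold, cited via Meyer) to write $c(p)$ as an explicit polynomial in $p$ minus the point count $N_p$, then compute $N_p$ by additive/multiplicative character expansion into Gauss sums and identify the result with ${}_{4}G\left(\tfrac{1}{5},\tfrac{2}{5},\tfrac{3}{5},\tfrac{4}{5}\right)_p$ via the Gross--Koblitz formula. The only differences are organizational details: the paper invokes Koblitz's ready-made Gauss-sum formula when $p\equiv 1\pmod 5$ and a direct additive-character count otherwise (never needing Gauss sums over extension fields, since for $p\not\equiv 1\pmod 5$ the map $x\mapsto x^5$ is a bijection on $\mathbb{F}_p$), and the correction $s(p)\,p$ arises from matching the two polynomial parts rather than from a single degenerate character term inside the sum.
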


As mentioned above, ${_{n+1}G}$ extends Greene's hypergeometric function over finite fields, which was introduced in \cite{G}. Let $\widehat{\mathbb{F}_p^{*}}$ denote the group of multiplicative characters of $\mathbb{F}_p^{*}$. 
We extend the domain of $\chi \in \widehat{\mathbb{F}_p^{*}}$ to $\mathbb{F}_{p}$ by defining $\chi(0):=0$ (including the trivial character $\varepsilon$) and denote $\bar{B}$ as the inverse of $B$. For $A$, $B \in \widehat{\mathbb{F}_p^{*}}$, define
$
\binom{A}{B} := 
\frac{B(-1)}{p} \sum_{x \in \mathbb{F}_{p}} A(x) \bar{B}(1-x).
$
Then for $A_0,A_1,\dotsc, A_n, B_1, B_2, \dotsc, B_n \in \widehat{\mathbb{F}_p^{*}}$ and $x \in \mathbb{F}_{p}$, define the Gaussian hypergeometric series by
\begin{equation*}
{_{n+1}F_n} {\Biggl( \begin{array}{cccc} A_0, & A_1, & \dotsc, & A_n \\
\phantom{A_0} & B_1, & \dotsc, & B_n \end{array}
\bigg| \; x \Biggr)}_{p}
:= \frac{p}{p-1} \sum_{\chi} \binom{A_0 \chi}{\chi} \prod_{i=1}^{n} \binom{A_i \chi}{B_i \chi}
\, \chi(x).
\end{equation*}

\noindent We recover the Gaussian hypergeometric series from $_{n+1}G$ via the following result. 
\begin{prop}[\cite{DMC1} Prop. 2.2]\label{prop_GtoGHS}
Let $n \in \mathbb{Z}^{+}$ and, for $1 \leq i \leq n+1$, let $\frac{m_i}{d_i} \in \mathbb{Q}$ such that $0<\frac{m_i}{d_i}<1$. 
Let $p \equiv 1 \pmod {d_i}$ be prime and let $\rho_i$ be the character of order $d_i$ of $\mathbb{F}_p^*$ given by $\bar{\omega}^{\frac{p-1}{d_i}}$, where $\omega$ is the Teichm\"{u}ller character. Then
\begin{align*}
{_{n+1}G} \left( \tfrac{m_1}{d_1}, \tfrac{m_2}{d_2}, \dotsc, \tfrac{m_{n+1}}{d_{n+1}} \right)_p
=(-p)^n \,  {_{n+1}F_n}  {\Bigg( \begin{array}{cccc} \rho_1^{m_1}, & \rho_2^{m_2}, & \dotsc, & \rho_{n+1}^{m_{n+1}} \\
\phantom{\rho_1^{m_1}} & \varepsilon, & \dotsc, & \varepsilon \end{array}
\bigg| \; 1 \Biggr)}_{p}.
\end{align*}
\end{prop}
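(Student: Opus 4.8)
The plan is to rewrite the finite-field side of the stated identity entirely in terms of Gauss sums and then apply the Gross--Koblitz formula, which is precisely the bridge relating such Gauss sums to values of Morita's $p$-adic gamma function. The hypothesis $p\equiv 1\pmod{d_i}$ is exactly what guarantees that the order-$d_i$ character $\rho_i$ exists, so the right-hand side is well defined; set $\alpha_i:=\tfrac{m_i}{d_i}$ and $A_i:=\rho_i^{m_i}=\bar\omega^{(p-1)\alpha_i}$, and note that $0<\alpha_i<1$ together with $d_i\mid p-1$ makes $(p-1)\alpha_i$ an integer in $\{1,\dotsc,p-2\}$, so each $A_i\neq\varepsilon$. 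Writing $g(\chi):=\sum_{x\in\mathbb{F}_p^{*}}\chi(x)\,\zeta_p^{x}$ for the Gauss sum and fixing $\pi$ with $\pi^{p-1}=-p$ compatible with the chosen additive character, the Gross--Koblitz formula reads $g(\bar\omega^{\,a})=-\pi^{\,a}\,\gfp{\tfrac{a}{p-1}}$ for $0\le a\le p-2$.

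First I would expand the binomial coefficients. Combining the definition of $\binom{A}{B}$ with the Jacobi-sum evaluation $J(A,B)=g(A)g(B)/g(AB)$ --- valid at every character occurring here because the governing product character is always $A_i\neq\varepsilon$ --- each factor becomes $\binom{A_i\chi}{\chi}=\tfrac{\chi(-1)}{p}\,g(A_i\chi)\,g(\bar\chi)/g(A_i)$, the lower parameters being trivial so that $\varepsilon\chi=\chi$. Collecting the $n+1$ factors and simplifying the scalar prefactors yields
\begin{equation*}
(-p)^{n}\;{}_{n+1}F_n\!\left(\cdots\mid 1\right)_{p}
=\frac{(-1)^{n}}{p-1}\sum_{\chi}\chi(-1)^{\,n+1}\,g(\bar\chi)^{\,n+1}\prod_{i=1}^{n+1}\frac{g(A_i\chi)}{g(A_i)}.
\end{equation*}
I would then parametrize the character sum by $\chi=\omega^{\,j}$ as $j$ runs over $0,\dotsc,p-2$, so that $\bar\chi=\bar\omega^{\,j}$, $\chi(-1)=\omega(-1)^{\,j}=(-1)^{\,j}$, and $A_i\chi=\bar\omega^{\,(p-1)\alpha_i-j}$; this choice lines the exponents up directly with the fractional parts $\langle\alpha_i-\tfrac{j}{p-1}\rangle$ appearing in $\,{}_{n+1}G$, with no further reindexing needed.

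The heart of the argument is the bookkeeping of the resulting powers of $\pi$ and of $-1$. Substituting Gross--Koblitz into $g(A_i\chi)$ and $g(A_i)$ and reducing exponents modulo $p-1$, the two $-1$'s cancel in the ratio and, using $\pi^{p-1}=-p$ to convert the integral part of the $\pi$-exponent into a power of $-p$, one obtains
\begin{equation*}
\frac{g(A_i\chi)}{g(A_i)}=\pi^{-j}\,(-p)^{-\lfloor\alpha_i-\tfrac{j}{p-1}\rfloor}\;\frac{\gfp{\langle\alpha_i-\tfrac{j}{p-1}\rangle}}{\gfp{\alpha_i}},
\end{equation*}
while $g(\bar\chi)^{\,n+1}=(-1)^{\,n+1}\,\pi^{\,j(n+1)}\,\gfp{\tfrac{j}{p-1}}^{\,n+1}$. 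The $n+1$ factors $\pi^{-j}$ cancel the $\pi^{\,j(n+1)}$ exactly; the signs $(-1)^{n}$ from $(-p)^{n}$, $(-1)^{n+1}$ from $g(\bar\chi)^{n+1}$, and $(-1)^{j(n+1)}$ from $\chi(-1)^{n+1}$ combine into $\tfrac{-1}{p-1}\big((-1)^{j}\big)^{n+1}$; and the floor-exponents assemble into $\prod_{i}(-p)^{-\lfloor\alpha_i-j/(p-1)\rfloor}$. What remains is term-for-term the definition of $\,{}_{n+1}G$, which completes the identity.

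The step I expect to be the main obstacle is exactly this reconciliation of $\pi$-powers and signs: one must check that the non-integral parts of the $\pi$-exponents cancel \emph{precisely}, so that a genuine integer power $(-p)^{-\sum_i\lfloor\alpha_i-j/(p-1)\rfloor}$ is left over, and that the three independent sources of $\pm1$ collapse to the single factor $\big((-1)^{j}\,\gfp{\tfrac{j}{p-1}}\big)^{n+1}$ with leading coefficient $\tfrac{-1}{p-1}$. A secondary point needing care is the behaviour at those $\chi$ with $\bar\chi=\varepsilon$ or some $A_i\chi=\varepsilon$: there $g(\varepsilon)=-1$, which agrees with $-\gfp{0}=-1$, so both the Jacobi-sum evaluation and Gross--Koblitz stay valid uniformly and no separate degenerate analysis is required --- but this coincidence should be verified rather than assumed.
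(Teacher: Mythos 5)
Your proof is correct, and it follows essentially the same route as the cited source \cite{DMC1} (this paper itself quotes Proposition \ref{prop_GtoGHS} without proof): expand each $\binom{A_i\chi}{\chi}$ via the Jacobi--Gauss sum relation (\ref{for_JactoGauss}), valid uniformly since $A_i\chi\cdot\bar\chi=A_i\neq\varepsilon$, parametrize $\chi=\omega^j$, and apply Gross--Koblitz (Theorem \ref{thm_GrossKoblitz}) with $\pi^{p-1}=-p$ converting the floor parts of the exponents into the factors $(-p)^{-\lfloor m_i/d_i-j/(p-1)\rfloor}$ of (\ref{for_GFn}). Your bookkeeping of the $\pi$-powers and signs, and your check of the degenerate cases $\bar\chi=\varepsilon$ and $A_i\chi=\varepsilon$ via $g(\varepsilon)=-1=-\gfp{0}$, are exactly the points that make the identity exact, and you handle them correctly.
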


\noindent Using Proposition \ref{prop_GtoGHS} it is easy to see the following corollary to Theorem \ref{cor_GStoMod}.
\begin{cor}\label{Cor_GHStoMod}
If $p \equiv 1 \pmod 5$ is prime 
and $c(p)$ is as defined in (\ref{for_ModForm}), then 
\begin{equation*}
-p^3 \; _4F_3 \Biggl( \begin{array}{cccc} \chi_5, & \chi_5^2, & \chi_5^3, & \chi_5^4 \\
\phantom{\chi_5} & \varepsilon, & \varepsilon, & \varepsilon \end{array}
\bigg| \; 1 \Biggr)_p 
- p
= c(p).
\end{equation*}
\end{cor}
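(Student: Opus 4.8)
The plan is to obtain Corollary \ref{Cor_GHStoMod} directly from Theorem \ref{cor_GStoMod} by rewriting the left-hand side through Proposition \ref{prop_GtoGHS} and then evaluating $s(p)$ in closed form. First I would apply Proposition \ref{prop_GtoGHS} with $n=3$ and $\frac{m_i}{d_i}=\frac{i}{5}$ for $1 \le i \le 4$. Since $p \equiv 1 \pmod 5$, every $d_i$ equals $5$, so the associated characters $\rho_i = \bar{\omega}^{(p-1)/5}$ all coincide with a single character $\rho$ of order $5$; setting $\chi_5 := \rho$, the proposition gives
\begin{equation*}
{_4G}\left(\tfrac15, \tfrac25, \tfrac35, \tfrac45\right)_p
= (-p)^3 \, {_4F_3} {\Biggl( \begin{array}{cccc} \chi_5, & \chi_5^2, & \chi_5^3, & \chi_5^4 \\ \phantom{\chi_5} & \varepsilon, & \varepsilon, & \varepsilon \end{array} \bigg| \; 1 \Biggr)}_p ,
\end{equation*}
and $(-p)^3 = -p^3$. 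Substituting this equality into the equality of Theorem \ref{cor_GStoMod} yields $-p^3\,{_4F_3}(\cdots)_p - s(p)\,p = c(p)$, so the entire corollary reduces to the single numerical claim that $s(p) = 1$ whenever $p \equiv 1 \pmod 5$ is an odd prime, for then $s(p)\,p = p$ and the displayed identity is exactly that of the corollary.

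To evaluate $s(p) = \gfp{\tfrac15}\gfp{\tfrac25}\gfp{\tfrac35}\gfp{\tfrac45}$ I would pair the arguments related by the reflection $x \mapsto 1-x$, namely $\tfrac45 = 1-\tfrac15$ and $\tfrac35 = 1-\tfrac25$, and invoke the reflection formula $\gfp{x}\gfp{1-x} = (-1)^{a_p(x)}$ for Morita's $p$-adic gamma function, where $a_p(x) \in \{1,2,\dotsc,p\}$ denotes the representative of $x$ modulo $p$. This gives $s(p) = (-1)^{a_p(1/5)+a_p(2/5)}$. Writing $p = 5k+1$, a direct computation shows $\tfrac15 \equiv 4k+1$ and $\tfrac25 \equiv 3k+1 \pmod p$, both of which lie in $\{1,\dotsc,p-1\}$, so $a_p(\tfrac15)+a_p(\tfrac25) = 7k+2$ and hence $s(p) = (-1)^{7k+2} = (-1)^k$. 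The decisive observation is that, because $p = 5k+1$ is odd, $5k$ is even and therefore $k$ is even; consequently $s(p) = (-1)^k = 1$, as required.

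Once $s(p)=1$ is in hand the corollary follows immediately, so I expect the only delicate point to be pinning down the correct normalization of the reflection formula for $\gfp{\cdot}$ — in particular that the representative $a_p(x)$ is taken in $\{1,\dotsc,p\}$ rather than $\{0,\dotsc,p-1\}$, since the parity of the exponent, and hence the value $s(p)=1$, depends on this convention. I would sanity-check the convention against the familiar identity $\gfp{\tfrac12}^2 = (-1)^{(p+1)/2}$ before applying it. Everything else is formal: the identification $\chi_5 = \rho$ is harmless because the top line $\{\rho,\rho^2,\rho^3,\rho^4\}$ is the full set of nontrivial characters of order $5$ regardless of the chosen generator, so the value of the Gaussian hypergeometric series is independent of that choice, and the factor $(-p)^3 = -p^3$ matches the stated form exactly.
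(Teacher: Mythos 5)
Your proposal is correct and follows essentially the same route as the paper: apply Proposition \ref{prop_GtoGHS} with $n=3$ to identify ${_4G}\left(\tfrac15,\tfrac25,\tfrac35,\tfrac45\right)_p$ with $-p^3\,{_4F_3}$, substitute into Theorem \ref{cor_GStoMod}, and use $s(p)=1$ for $p\equiv 1\pmod 5$. The only difference is that the paper simply notes the value of $s(p)$ without proof, whereas you derive it from the reflection formula $\gfp{x}\gfp{1-x}=(-1)^{a_p(x)}$ (with the correct convention $a_p(x)\in\{1,\dotsc,p\}$), and your computation $a_p(\tfrac15)+a_p(\tfrac25)=7k+2$ with $k$ even is a valid justification of that assertion.
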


The remainder of the paper is organized as follows. Section 2 recalls some properties of Gauss and Jacobi sums, and the $p$-adic gamma function. The proof of Theorems \ref{thm_DMCMain} and \ref{cor_GStoMod} appear in Section 3.


\section{Preliminaries}
We briefly recall some properties of Gauss and Jacobi sums and the $p$-adic gamma function, and also develop some preliminary results which we will use in Section 3. Throughout we let $\mathbb{Z}_p$, $\mathbb{Q}_p$ and $\mathbb{C}_p$ denote the ring of $p$-adic integers, the field of $p$-adic numbers and the $p$-adic completion of the algebraic closure of $\mathbb{Q}_p$, respectively.
\subsection{Gauss and Jacobi Sums}
We first recall some properties of multiplicative characters. 
In particular, we note the following orthogonal relations. For $\chi \in \widehat{\mathbb{F}_p^{*}}$ we have
\begin{equation}\label{for_TOrthEl}
\sum_{x \in \mathbb{F}_p} \chi(x)=
\begin{cases}
p-1 & \text{if $\chi = \varepsilon$}  ,\\
0 & \text{if $\chi \neq \varepsilon$}  ,
\end{cases}
\end{equation}
and, for $x \in \mathbb{F}_p$ we have
\begin{equation}\label{for_TOrthCh}
\sum_{\chi \in \widehat{\mathbb{F}_p^{*}}} \chi(x)=
\begin{cases}
p-1 & \text{if $x=1$}  ,\\
0 & \text{if $x \neq 1$}  .
\end{cases}
\end{equation}

We now introduce some properties Gauss and Jacobi sums. For further details see \cite{BEW} and \cite{IR}, noting that we have adjusted results to take into account $\varepsilon(0)=0$. Let $\zeta_p$ be a fixed primitive $p$-th root of unity in $\bar{\mathbb{Q}_p}$. We define the additive character $\theta : \mathbb{F}_p \rightarrow \mathbb{Q}_p(\zeta_p)$ by $\theta(x):=\zeta_p^{x}$.
\noindent 
We note that $\mathbb{Q}_p$ contains all $(p-1)$-th roots of unity and in fact they are all in $\mathbb{Z}^{*}_p$. Thus we can consider multiplicative characters of $\mathbb{F}_p^{*}$ to be maps $\chi: \mathbb{F}_p^{*} \to \mathbb{Z}_{p}^{*}$. Recall then that for $\chi \in \widehat{\mathbb{F}_p^{*}}$, the Gauss sum $g(\chi)$ is defined by 
$g(\chi):= \sum_{x \in \mathbb{F}_p} \chi(x) \theta(x).$
It easily follows from (\ref{for_TOrthCh}) that we can express the additive character as a sum of Gauss sums. Specifically, for $x \in \mathbb{F}_p^*$\hspace{1pt} we have
\begin{equation}\label{for_AddtoGauss}
\theta(x)= \frac{1}{p-1}\sum_{\chi \in \widehat{\mathbb{F}_p^{*}}} g(\bar{\chi})  \, \chi(x).
\end{equation}
The following important result gives a simple expression for the product of two Gauss sums. For $\chi \in \widehat{\mathbb{F}_p^{*}}$ we have
\begin{equation}\label{for_GaussConj}
g(\chi)g(\bar{\chi})=
\begin{cases}
\chi(-1) p & \text{if } \chi \neq \varepsilon,\\
1 & \text{if } \chi= \varepsilon.
\end{cases}
\end{equation}
\noindent Another important product formula for Gauss sums is the Hasse-Davenport formula.
\begin{theorem}[Hasse, Davenport \cite{BEW} Thm 11.3.5]\label{thm_HD}
Let $\chi$ be a character of order $m$ of $\mathbb{F}_p^*$ for some positive integer $m$. For a character $\psi$ of $\mathbb{F}_p^*$ we have
\begin{equation*}
\prod_{i=0}^{m-1} g(\chi^i \psi) = g(\psi^m) \psi^{-m}(m)\prod_{i=1}^{m-1} g(\chi^i).
\end{equation*}
\end{theorem}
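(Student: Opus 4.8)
The plan is to prove the multiplication formula directly from the definition of the Gauss sum, using only the orthogonality relations (\ref{for_TOrthEl}), (\ref{for_TOrthCh}) and the conjugate-product identity (\ref{for_GaussConj}). Writing each factor as $g(\chi^i\psi)=\sum_{x_i\in\mathbb{F}_p}(\chi^i\psi)(x_i)\,\theta(x_i)$ and multiplying out, the left-hand side becomes the $m$-fold character sum
\begin{equation*}
\prod_{i=0}^{m-1}g(\chi^i\psi)=\sum_{x_0,\dotsc,x_{m-1}\in\mathbb{F}_p^{*}}\chi\Bigl(\textstyle\prod_{i=0}^{m-1}x_i^{\,i}\Bigr)\,\psi\Bigl(\textstyle\prod_{i=0}^{m-1}x_i\Bigr)\,\theta\Bigl(\textstyle\sum_{i=0}^{m-1}x_i\Bigr).
\end{equation*}
Since $\prod_{i=0}^{m-1}\chi^i\psi=\chi^{m(m-1)/2}\psi^m=:\eta$, the Gauss--Jacobi relation $g(\alpha)g(\beta)=J(\alpha,\beta)\,g(\alpha\beta)$, applied repeatedly and with degenerate partial products simplified through (\ref{for_GaussConj}), factors this product as $J\cdot g(\eta)$, where $J$ is the multiple Jacobi sum attached to the characters $\psi,\chi\psi,\dotsc,\chi^{m-1}\psi$. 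The remaining tasks are then to evaluate $J$ and to relate $g(\eta)$ to $g(\psi^m)$.

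The essential content lies in evaluating the Jacobi sum $J$, and this is where the exact order $m$ of $\chi$ must be used: the characters $\psi,\chi\psi,\dotsc,\chi^{m-1}\psi$ form the full coset $\psi\langle\chi\rangle$, where $\langle\chi\rangle$ is exactly the group of characters trivial on the $m$-th powers $(\mathbb{F}_p^{*})^m$. The goal is to show that the product of Gauss sums over this coset collapses to the single Gauss sum $g(\psi^m)$ of the $m$-th power character, with the $m-1$ residual degrees of freedom contributing the fixed factor $\prod_{i=1}^{m-1}g(\chi^i)$. I would carry this out by reorganising the $m$-fold sum so that the $m$-th power map enters — for instance via $\sum_{i=0}^{m-1}\chi^i(x)=m\,[x\in(\mathbb{F}_p^{*})^m]$ from (\ref{for_TOrthEl}) — repeatedly using (\ref{for_GaussConj}) to collapse conjugate pairs $g(\rho)g(\bar\rho)=\rho(-1)p$. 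The finitely many $\psi$ for which some $\chi^i\psi$ (or some partial product) equals $\varepsilon$ must be treated separately, since there $g(\varepsilon)=-1$ replaces the generic $\sqrt{p}$-sized factor; a consistency check in $\psi$, anchored at $\psi=\varepsilon$ (where both sides reduce to $-\prod_{i=1}^{m-1}g(\chi^i)$), reconciles these boundary cases with the generic identity.

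The normalisation factor $\psi^{-m}(m)$ is where I expect the real difficulty, and it is genuinely sensitive to the parity of $m$. When $m$ is odd, $\chi^{m(m-1)/2}=\varepsilon$, so $\eta=\psi^m$ and no quadratic correction is needed. When $m$ is even, $\chi^{m(m-1)/2}=\chi^{m/2}$ is the quadratic character of $\mathbb{F}_p^{*}$, so $g(\eta)=g(\chi^{m/2}\psi^m)$, and passing from this to $g(\psi^m)$ forces one to invoke the $m=2$ (duplication) instance of the formula together with the quadratic evaluation $g(\chi^{m/2})^2=\chi^{m/2}(-1)\,p$ supplied by (\ref{for_GaussConj}); it is precisely this step that manufactures the power of $m$ sitting inside the character and pins the exponent down so that the combined constant is exactly $\psi^{-m}(m)$. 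Finally, since $m\mid p-1$ forces $p\nmid m$, the symbol $\psi^{-m}(m)$ is a well-defined unit and $\varepsilon(m)=1$, so the evaluation at $\psi=\varepsilon$ is consistent and serves as a check on the leading constants. The main obstacle, then, is not the structural reduction but this careful sign-and-normalisation bookkeeping, in particular extracting the clean factor $\psi^{-m}(m)$ uniformly across the parity of $m$.
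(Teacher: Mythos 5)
The paper does not actually prove Theorem \ref{thm_HD}; it is quoted from \cite{BEW} (Thm.\ 11.3.5), so your attempt must stand on its own. Its structural skeleton is sound: the expansion of $\prod_{i=0}^{m-1}g(\chi^i\psi)$ into an $m$-fold sum is correct, the reduction via (\ref{for_JactoGauss}) to a multiple Jacobi sum times $g(\eta)$ with $\eta=\chi^{m(m-1)/2}\psi^m$ is legitimate, the parity computation of $\chi^{m(m-1)/2}$ ($\varepsilon$ for odd $m$, quadratic for even $m$) is right, and the degenerate cases $\psi\in\langle\chi\rangle$ are indeed harmless (for $\psi=\bar{\chi}^j$ the left side is just a reindexing of $\prod_{k=0}^{m-1}g(\chi^k)$ and both sides equal $-\prod_{k=1}^{m-1}g(\chi^k)$, using $g(\varepsilon)=-1$ and $\varepsilon(m)=1$).

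The genuine gap is that you have reduced the theorem to, not derived, the evaluation $J(\psi,\chi\psi,\dotsc,\chi^{m-1}\psi)=\psi^{-m}(m)\,g(\psi^m)\,g(\eta)^{-1}\prod_{i=1}^{m-1}g(\chi^i)$, which \emph{is} the theorem; at exactly this point the proposal offers only ``reorganise the sum so the $m$-th power map enters'' plus collapsing conjugate pairs via (\ref{for_GaussConj}). That mechanism cannot work: for generic $\psi$ the coset characters $\chi^i\psi$ are not conjugate to one another, so (\ref{for_GaussConj}) has nothing to collapse, and the indicator $\sum_i\chi^i(x)=m\,[x\in(\mathbb{F}_p^{*})^m]$ detects $m$-th powers but does not produce the argument $m$ inside $\psi^{-m}(m)$. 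That factor has a specific source your outline never identifies: by Fourier inversion in $\psi$ via (\ref{for_TOrthCh}), and since $g(\psi^m)\psi^{-m}(m)=\sum_{v}\psi(v^m)\theta(mv)$, the identity is equivalent to the pointwise statement that for all $u\in\mathbb{F}_p^{*}$,
\begin{equation*}
\sum_{\substack{x_0\dotsm x_{m-1}=u \\ x_i\in\mathbb{F}_p^{*}}}\chi\Bigl(\textstyle\prod_i x_i^{\,i}\Bigr)\,\theta\Bigl(\textstyle\sum_i x_i\Bigr)
=\prod_{i=1}^{m-1}g(\chi^i)\sum_{v:\,v^m=u}\theta(mv),
\end{equation*}
where $m$ enters through $\sum_i x_i=mv$ on the diagonal $x_0=\dotsb=x_{m-1}=v$; nothing in your sketch isolates this diagonal or controls the off-diagonal mass, and that is the entire analytic content. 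Your own parity discussion confirms the hole: you claim the even-$m$ duplication step ``manufactures the power of $m$,'' yet for odd $m$ you correctly note no quadratic correction occurs, leaving $\psi^{-m}(m)$ wholly unexplained in that case --- and the even-$m$ plan invokes the $m=2$ instance of the theorem itself without an independent proof. If you want a proof within this paper's toolkit, the efficient route is Theorem \ref{thm_GrossKoblitz} together with the $\Gamma_p$ multiplication formula (\ref{for_pGammaMult}): taking $\psi=\bar{\omega}^{\,r}$ turns the left side into the product $\prod_h\Gamma_p\bigl(\tfrac{x+h}{m}\bigr)$ up to powers of $\pi$, and the factor $\omega\bigl(m^{(1-x)(1-p)}\bigr)$ is precisely $\psi^{-m}(m)$ --- provided one proves (\ref{for_pGammaMult}) independently of Hasse--Davenport to avoid circularity.
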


We now introduce generalized Jacobi sums.
Let $\chi_1, \chi_2, \dotsc, \chi_k \in \widehat{\mathbb{F}_p^{*}}$. Then the generalized Jacobi sum $J(\chi_1, \chi_2, \dotsc, \chi_k)$ of order $k$ is defined by
\begin{equation*}\label{for_GenJacSum}
J(\chi_1, \chi_2, \dotsc, \chi_k):= \sum_{\substack{t_1+t_2+ \dotsm + t_k=1\\ t_i \in \mathbb{F}_p}} \chi_1(t_1) \chi_2(t_2) \dotsm \chi_k(t_k).
\end{equation*}
There is a general formula relating generalized Jacobi sums of order $k$ to ones of order $k-1$.  One special case is the following. If $\chi_1\chi_2\dotsm\chi_k$ is trivial but at least one of $\chi_1, \chi_2, \dotsc, \chi_k$ is non-trivial, then
\begin{equation}\label{for_JacRedSmall}
J(\chi_1, \chi_2, \dotsc, \chi_k)=-\chi_k(-1)  J(\chi_1, \chi_2, \dotsc, \chi_{k-1}) \; .
\end{equation}

\noindent We can relate generalized Jacobi sums to Gauss sums in the following way.
For $\chi_1, \chi_2, \dotsc, \chi_k \in \widehat{\mathbb{F}_p^{*}}$ not all trivial,\\
\begin{equation}\label{for_JactoGauss}
J(\chi_1, \chi_2, \dotsc, \chi_k)=
\begin{cases}
\dfrac{g(\chi_1)g(\chi_2)\dotsc g(\chi_k)}{g(\chi_1 \chi_2 \dotsm \chi_k)}
& \qquad \text{if } \chi_1 \chi_2 \dotsm \chi_k \neq \varepsilon,\\[18pt]
-\dfrac{g(\chi_1)g(\chi_2)\dotsc g(\chi_k)}{p}
&\qquad \text{if }\chi_1 \chi_2 \dotsm \chi_k = \varepsilon \: .
\end{cases}
\end{equation}

\noindent We now develop some new results which we will use in Section 3.
\begin{lemma}\label{lem_JacTsum}
Let $p\equiv1 \pmod5$ be prime and let $\psi$ be a character of order $5$ of $\mathbb{F}_p^*$. If  $a,b,c \in \mathbb{Z}$ are such that $a+c$, $b+c \not\equiv 0 \pmod5$, then
\begin{equation*}
\sum_{\chi \in \widehat{\mathbb{F}_p^{*}}}  \chi(-1) J(\bar{\chi} \psi^a,\, \bar{\chi} \psi^b, \, \chi \psi^c)=-(p-1).
\end{equation*}
\end{lemma}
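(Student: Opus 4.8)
The plan is to prove the identity by a direct computation in which the order of summation is reversed so that the sum over $\chi$ can be collapsed by the orthogonality relation (\ref{for_TOrthCh}). First I would expand each Jacobi sum by its definition, writing
\begin{equation*}
\sum_{\chi}  \chi(-1) J(\bar{\chi} \psi^a,\, \bar{\chi} \psi^b, \, \chi \psi^c) = \sum_{\chi} \sum_{t_1+t_2+t_3=1} \chi(-1)\,\bar{\chi}(t_1)\psi^a(t_1)\,\bar{\chi}(t_2)\psi^b(t_2)\,\chi(t_3)\psi^c(t_3),
\end{equation*}
the outer sum being over all $\chi \in \widehat{\mathbb{F}_p^{*}}$. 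Since every character vanishes at $0$, only triples with $t_1,t_2,t_3 \neq 0$ contribute, and for such a triple the $\chi$-dependent factors combine into $\chi\bigl(-t_3/(t_1t_2)\bigr)$. Interchanging the two sums and applying (\ref{for_TOrthCh}) then leaves only the triples satisfying $-t_3/(t_1t_2)=1$, each weighted by $p-1$; explicitly
\begin{equation*}
\sum_{\chi}  \chi(-1) J(\bar{\chi} \psi^a,\, \bar{\chi} \psi^b, \, \chi \psi^c) = (p-1)\sum_{\substack{t_1,t_2,t_3 \neq 0\\ t_1+t_2+t_3=1\\ t_3=-t_1t_2}} \psi^a(t_1)\psi^b(t_2)\psi^c(t_3).
\end{equation*}

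Next I would eliminate $t_3$. Substituting $t_3=-t_1t_2$ into $t_1+t_2+t_3=1$ gives $1-t_1-t_2+t_1t_2=0$, that is $(1-t_1)(1-t_2)=0$, so the remaining sum runs over those $(t_1,t_2)$ with $t_1,t_2\neq 0$ and $t_1=1$ or $t_2=1$ (note $t_3=-t_1t_2\neq 0$ is then automatic). I would evaluate it by inclusion--exclusion over the two subsets $\{t_1=1\}$ and $\{t_2=1\}$.

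On the subset $t_1=1$ we have $t_3=-t_2$, and using $\psi(-1)=1$ (which holds because $\psi(-1)$ is simultaneously a value of the order-$5$ character $\psi$ and a square root of unity, hence equal to $1$) the summand reduces to $\psi^{b+c}(t_2)$. By (\ref{for_TOrthEl}) the sum of $\psi^{b+c}(t_2)$ over all $t_2$ vanishes, precisely because $b+c\not\equiv 0 \pmod 5$ makes $\psi^{b+c}$ non-trivial; symmetrically the subset $t_2=1$ vanishes using $a+c\not\equiv 0\pmod 5$. The only surviving contribution is the overlap $t_1=t_2=1$ (so $t_3=-1$), contributing $\psi^c(-1)=1$. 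Inclusion--exclusion therefore yields $0+0-1=-1$, and multiplying by $p-1$ gives the claimed value $-(p-1)$.

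The computation is essentially forced once the summation order is reversed, so I do not anticipate a serious obstacle. The points that require care are the bookkeeping of the degenerate cases $t_i=0$ (harmlessly killed since every character vanishes at $0$) and the double-counted point $t_1=t_2=1$, together with the elementary observation $\psi(-1)=1$ that is needed to discard the factor $\psi^c(-1)$ and to simplify the vanishing subsets. An alternative route via the Gauss-sum evaluation (\ref{for_JactoGauss}) is available, but it forces a case split according to whether $\bar{\chi}\psi^{a+b+c}$ is trivial and is therefore less direct than the orthogonality argument above.
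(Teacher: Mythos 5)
Your proposal is correct and follows essentially the same route as the paper's proof: expand the Jacobi sums, interchange summation, collapse the $\chi$-sum by orthogonality to the condition $t_3=-t_1t_2$, reduce the surviving triples to $t_1=1$ or $t_2=1$, and finish with the non-triviality of $\psi^{a+c}$, $\psi^{b+c}$ together with $\psi(-1)=1$. Your explicit factorization $(1-t_1)(1-t_2)=0$ is in fact a cleaner justification of the parametrization that the paper merely asserts.
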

\begin{proof}
\begin{align*}
\sum_{\chi \in \widehat{\mathbb{F}_p^{*}}}  \chi(-1) J(\bar{\chi} \psi^a,\, \bar{\chi} \psi^b, \, \chi \psi^c)
&=\sum_{\chi \in \widehat{\mathbb{F}_p^{*}}}  \chi(-1)  \sum_{\substack{t_1+t_2+ t_3=1\\ t_i \in \mathbb{F}_p^*}} \bar{\chi} \psi^a(t_1) \, \bar{\chi} \psi^b(t_2) \, \chi \psi^c(t_3)\\
&=\sum_{\chi \in \widehat{\mathbb{F}_p^{*}}}  \chi(-1)   \sum_{\substack{t_1+t_2+ t_3=1\\ t_i \in \mathbb{F}_p^*}}\bar{\chi}\left(\frac{t_1t_2}{t_3}\right) \psi(t_1^a \; t_2^b \; t_3^c)\\
&= (p-1) \sum_{\substack{t_1+t_2+ t_3=1\\ t_i \in \mathbb{F}_p^*\\-\frac{t_1t_2}{t_3}=1}} \psi(t_1^a \; t_2^b \; t_3^c) \qquad  \text{by (\ref{for_TOrthCh}).}
\end{align*}
All possible triples $(t_1,t_2,t_3)$ satisfying the conditions of the summation can be represented by $(t_1,1,-t_1)$ and $(1,t_2,-t_2)$, not counting $(1,1,-1)$ twice. Hence
\begin{align*}
\sum_{\chi \in \widehat{\mathbb{F}_p^{*}}}  \chi(-1) & J(\bar{\chi} \psi^a,\, \bar{\chi} \psi^b, \, \chi \psi^c)\\
&=
(p-1) \left[\sum_{t_1\in \mathbb{F}_p^*} \psi(t_1^{a+c}\: (-1)^c) + \sum_{t_2\in \mathbb{F}_p^*} \psi(t_2^{b+c} \:(-1)^c) - \psi((-1)^c) \right]\\
&=(p-1)\;\psi^c(-1) \left[\sum_{t_1\in \mathbb{F}_p^*} \psi^{a+c}(t_1) + \sum_{t_2\in \mathbb{F}_p^*} \psi^{b+c}(t_2) - 1 \right].
\end{align*}
Now $a+c$,  $b+c \not\equiv 0 \pmod5$ so both $\psi^{a+c}$ and $\psi^{b+c}$ are non-trivial. Thus, by (\ref{for_TOrthEl}), 
$$\displaystyle \sum_{t_1\in \mathbb{F}_p^*} \psi^{a+c}(t_1) = \sum_{t_2\in \mathbb{F}_p^*} \psi^{b+c}(t_2)=0.$$
Also, $\psi(-1)=\psi^5(-1) =1,$ which completes the proof. 
\end{proof}

\begin{cor}\label{cor_GaussTsum}
Let $p\equiv1 \pmod5$ be prime and let $\psi$ be a character of order $5$ of $\mathbb{F}_p^*$. If  $a,b,c \in \mathbb{Z}$ are such that $a+c$, $b+c \not\equiv 0 \pmod5$, then
\begin{equation*}
\sum_{\chi \in \widehat{\mathbb{F}_p^{*}}}  g(\bar{\chi} \psi^a) \,g(\bar{\chi} \psi^b) \, g(\chi \psi^c) \, g(\chi \bar{\psi}{}^{a+b+c})=-p(p-1).
\end{equation*}
\end{cor}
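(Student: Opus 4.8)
The plan is to pass from the Jacobi-sum identity of Lemma 2.3 to the Gauss-sum identity by inverting the relation (\ref{for_JactoGauss}) inside the sum over $\chi$. The guiding observation is that the four characters in the product, namely $\bar{\chi}\psi^a$, $\bar{\chi}\psi^b$, $\chi\psi^c$ and $\chi\bar{\psi}{}^{a+b+c}$, multiply to $\varepsilon$, so their product of Gauss sums is precisely the kind of quantity governed by the $\chi_1\chi_2\dotsm\chi_k=\varepsilon$ case of the Jacobi--Gauss conversion.

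First I would split the sum over $\chi\in\widehat{\mathbb{F}_p^*}$ according to whether the first three characters $\bar{\chi}\psi^a$, $\bar{\chi}\psi^b$, $\chi\psi^c$ are all trivial or not. For the generic terms, where not all three are trivial, their product is $\bar{\chi}\psi^{a+b}\cdot\chi\psi^c=\psi^{a+b+c}$; since $a+b+c$ need not be divisible by $5$ this product may or may not be $\varepsilon$, so I would further subdivide. When $\psi^{a+b+c}\neq\varepsilon$ the top case of (\ref{for_JactoGauss}) gives $J(\bar{\chi}\psi^a,\bar{\chi}\psi^b,\chi\psi^c)=g(\bar{\chi}\psi^a)g(\bar{\chi}\psi^b)g(\chi\psi^c)/g(\psi^{a+b+c})$, and the key move is to clear the denominator using the product formula (\ref{for_GaussConj}): since $g(\psi^{a+b+c})g(\bar{\psi}{}^{a+b+c})=\psi^{a+b+c}(-1)\,p$, multiplying Lemma 2.3 through by $g(\bar{\psi}{}^{a+b+c})$ and recognizing $g(\chi\bar{\psi}{}^{a+b+c})$ as the fourth factor should reproduce the four-fold Gauss sum up to the constant $\psi^{a+b+c}(-1)\,p$. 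When $\psi^{a+b+c}=\varepsilon$ the bottom case of (\ref{for_JactoGauss}) applies instead, introducing the factor $-1/p$ and the degenerate value $g(\varepsilon)=-1$; I would check that these two regimes assemble into the single clean constant $-p(p-1)$ claimed.

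The degenerate terms require separate care. The character $\chi\bar{\psi}{}^{a+b+c}$ becomes trivial for exactly one $\chi$, and likewise each of $\bar{\chi}\psi^a$, $\bar{\chi}\psi^b$, $\chi\psi^c$ vanishes for one value of $\chi$; at these points a Gauss sum degenerates to $g(\varepsilon)=-1$, so I cannot simply invoke (\ref{for_JactoGauss}) termwise. The cleanest route is to keep the whole sum intact: rather than converting term by term, I would multiply the entire identity of Lemma 2.3 by the common factor $g(\bar{\psi}{}^{a+b+c})$ and use the reduction (\ref{for_JacRedSmall}) together with (\ref{for_GaussConj}) to rewrite the product of four Gauss sums as $p$ times a Jacobi sum (or its reduction) uniformly, absorbing the isolated exceptional characters into the orthogonality already exploited in Lemma 2.3. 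Because the hypothesis $a+c$, $b+c\not\equiv 0\pmod 5$ was exactly what forced the two inner character sums to vanish in Lemma 2.3, I expect the same hypothesis to neutralize the would-be boundary contributions here.

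The main obstacle will be the bookkeeping at the degenerate characters: ensuring that the substitution $J\mapsto g\,g\,g/g$ is applied with the correct case of (\ref{for_JactoGauss}) at every $\chi$, and that the stray factors of $-1$ from $g(\varepsilon)$ and the sign $\psi^{a+b+c}(-1)$ combine to give exactly $-p(p-1)$ rather than an off-by-a-sign or off-by-$p$ answer. I anticipate that the safest implementation is to avoid case analysis entirely by multiplying Lemma 2.3 by $g(\bar{\psi}{}^{a+b+c})$ and invoking (\ref{for_JactoGauss}) in its $\chi_1\chi_2\dotsm\chi_k=\varepsilon$ form on the resulting order-four sum, since the four characters there always multiply to $\varepsilon$; this converts the left side directly into $-\tfrac{1}{p}\sum_\chi \chi(-1)\,g(\bar{\chi}\psi^a)g(\bar{\chi}\psi^b)g(\chi\psi^c)g(\chi\bar{\psi}{}^{a+b+c})$ up to the $g(\bar{\psi}{}^{a+b+c})$ normalization, and matching constants then yields the stated $-p(p-1)$.
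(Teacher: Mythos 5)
You have the paper's skeleton --- convert the four-fold product of Gauss sums into the order-three Jacobi sum of Lemma \ref{lem_JacTsum} --- but the middle of your argument rests on a character-arithmetic slip that you then spend most of the proposal trying to work around. The product of the three characters appearing in Lemma \ref{lem_JacTsum} is
\begin{equation*}
\bar{\chi}\psi^a \cdot \bar{\chi}\psi^b \cdot \chi\psi^c \;=\; \bar{\chi}\,\psi^{a+b+c},
\end{equation*}
not $\psi^{a+b+c}$ (you collapsed $\bar{\chi}\cdot\bar{\chi}$ to $\bar{\chi}$). So the case distinction in (\ref{for_JactoGauss}) is not governed globally by whether $5 \mid a+b+c$: the top case applies for all $\chi$ except the single character $\chi=\psi^{a+b+c}$, and the denominator it produces, $g(\bar{\chi}\psi^{a+b+c})$, depends on $\chi$. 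For the same reason, multiplying the identity of Lemma \ref{lem_JacTsum} by the \emph{constant} $g(\bar{\psi}{}^{a+b+c})$ --- your repeated ``key move'' --- cannot manufacture the $\chi$-dependent fourth factor $g(\chi\bar{\psi}{}^{a+b+c})$, and your final displayed conversion (which carries an extra $\chi(-1)$, an extra sign, and an unresolved ``normalization'') is not an identity. The correct termwise statement, which one can check directly even at $\chi=\psi^{a+b+c}$ where $g(\varepsilon)=-1$ enters, is $g(\bar{\chi}\psi^a)\,g(\bar{\chi}\psi^b)\,g(\chi\psi^c)\,g(\chi\bar{\psi}{}^{a+b+c})=\chi(-1)\,p\,J(\bar{\chi}\psi^a,\bar{\chi}\psi^b,\chi\psi^c)$.

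The clean route --- the paper's --- works at order four from the start, and your own closing observation already contains its germ: the four characters always multiply to $\varepsilon$, and the hypothesis $a+c\not\equiv 0 \pmod 5$ guarantees they are never \emph{all} trivial (if $\bar{\chi}\psi^a=\varepsilon$ then $\chi\psi^c=\psi^{a+c}\neq\varepsilon$), which is the only situation (\ref{for_JactoGauss}) excludes. Hence, termwise, with no exceptional $\chi$ and no occurrence of $g(\varepsilon)$,
\begin{align*}
\sum_{\chi \in \widehat{\mathbb{F}_p^{*}}} g(\bar{\chi}\psi^a)\,g(\bar{\chi}\psi^b)\,g(\chi\psi^c)\,g(\chi\bar{\psi}{}^{a+b+c})
&= -p\sum_{\chi \in \widehat{\mathbb{F}_p^{*}}} J\left(\bar{\chi}\psi^a,\,\bar{\chi}\psi^b,\,\chi\psi^c,\,\chi\bar{\psi}{}^{a+b+c}\right)\\
&= p\sum_{\chi \in \widehat{\mathbb{F}_p^{*}}} \chi(-1)\,J\left(\bar{\chi}\psi^a,\,\bar{\chi}\psi^b,\,\chi\psi^c\right),
\end{align*}
where the second line is (\ref{for_JacRedSmall}) combined with $\psi(-1)=1$ (forced since $\psi$ has odd order, so $(\chi\bar{\psi}{}^{a+b+c})(-1)=\chi(-1)$). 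Lemma \ref{lem_JacTsum} now gives $p\cdot(-(p-1))=-p(p-1)$. Note that your worry about degenerate characters is thereby dissolved: what neutralizes them is not orthogonality carried over from the lemma, but the hypothesis ensuring (\ref{for_JactoGauss}) applies at every $\chi$; no case analysis is needed anywhere.
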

\begin{proof}
Using (\ref{for_JacRedSmall}) and (\ref{for_JactoGauss}) we see that
\begin{align*}
\sum_{\chi \in \widehat{\mathbb{F}_p^{*}}}  g(\bar{\chi} \psi^a) \,g(\bar{\chi} \psi^b) \, & g(\chi \psi^c) \,  g(\chi \bar{\psi}{}^{a+b+c})\\
&=-p\sum_{\chi \in \widehat{\mathbb{F}_p^{*}}} J\left(\bar{\chi} \psi^a, \, \bar{\chi} \psi^b, \, \chi \psi^c, \, \chi \bar{\psi}{}^{a+b+c}\right) \\
&=p\sum_{\chi \in \widehat{\mathbb{F}_p^{*}}} \chi(-1) \; J\left(\bar{\chi} \psi^a, \, \bar{\chi} \psi^b, \, \chi \psi^c \right). 
\end{align*}
Applying Lemma \ref{lem_JacTsum} then yields the result.
\end{proof}

We now recall a formula for counting zeros of polynomials in affine space using the additive character.
If $f(x_1, x_2, \ldots x_n) \in \mathbb{F}_p[x_1, x_2, \ldots x_n]$, then the number of points, $N_p^*$, in $\mathbb{A}^n(\mathbb{F}_p)$ satisfying
$f(x_1, x_2, \ldots x_n) =0$ is given by
\begin{equation}\label{for_CtgPts}
p N_p^* = p^n +\sum_{y \in \mathbb{F}_p^*} \sum_{x_1, x_2, \ldots x_n \in \mathbb{F}_p}
\theta(y \: f(x_1, x_2, \ldots x_n)) \; .
\end{equation}


In \cite{Ko4}, Koblitz provides a formula for the number of points in $\mathbb{P}^{n-1}(\mathbb{F}_p)$ on the hypersurface 
$x_1^d + x_2^d + \dots + x_n^d - d \lambda x_1 x_2 \dots x_n=0$,
for some $d, \lambda \in \mathbb{F}_p$, where $p \equiv 1 \pmod d$. Let 
\begin{equation}\label{Def_W}
W:=\{w=(w_1, w_2, \ldots, w_n) \in \mathbb{Z}^n : 0 \leq w_i < d, \sum_{i=1}^n w_i \equiv 0 \pmod d\}. 
\end{equation}
\noindent Let $T$ be a fixed generator for the group of characters of $\mathbb{F}_p^*$ and set $t:=\frac{p-1}{d}$. 
Define an equivalence relation $\sim$ on $W$ by 
\begin{equation}\label{Def_tilde}
w \sim w \prime \textup{ if } w- w\prime \textup{ is a multiple modulo $d$ of $(1,1, \ldots, 1)$}. 
\end{equation}
Define 
\begin{equation}\label{Def_Np0}
N_p(0,w):=
\begin{cases}
0 & \text{if some but not all } w_i=0,\\[6pt]
\frac{p^{n-1}-1}{p-1} & \text{if all } w_i=0,\\[6pt]
\frac{1}{p} \prod_{i=1}^{n} g(T^{w_i t}) & \text{if all } w_i \neq 0.
\end{cases}
\end{equation}

\noindent Then we have the following theorem.
\begin{theorem}[Koblitz \cite{Ko4} Thm. 2]\label{thm_Koblitz}
Let $N_p(\lambda)$ be the number of points in $\mathbb{P}^{n-1}(\mathbb{F}_p)$ on $\sum_{i=1}^n x_i^d - d \lambda \prod_{j=1}^{n} x_i=0$, for some $d, \lambda \in \mathbb{F}_p$. Let $W$, $\sim$ and $N_p(0,w)$ be defined by (\ref{Def_W}), (\ref{Def_tilde}) and (\ref{Def_Np0}) respectively. Let $T$ be a fixed generator for the group of characters of $\mathbb{F}_p^*$, let $p \equiv 1 \pmod d$ and define $t:=\frac{p-1}{d}$. Then
\begin{equation*}
N_p(\lambda) = \sum_{w \in W} N_p(0,w) + \frac{1}{p-1} \sum_{[w] \in W/ \sim} \; \sum_{j=0}^{p-2} 
\frac{\prod_{i=1}^{n} g(T^{j+w_i t})}{g(T^{dj})}\; T^{dj}(d \lambda).\\
\end{equation*}
\end{theorem}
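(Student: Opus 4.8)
The plan is to count affine solutions and then descend to $\mathbb{P}^{n-1}$. The defining polynomial $f(x):=\sum_{i=1}^{n}x_i^{d}-d\lambda\prod_{i=1}^{n}x_i$ is homogeneous of degree $d$ precisely when $n=d$, which is the case in which the projective count is meaningful, so I assume $n=d$ throughout. Writing $N^{*}$ for the number of affine $\mathbb{F}_p$-points on $f=0$ (the origin included), the $\mathbb{F}_p^{*}$-scaling action gives $N^{*}=1+(p-1)N_p(\lambda)$, and combining this with the affine point-count formula (\ref{for_CtgPts}) yields
\[
p(p-1)\,N_p(\lambda)=p^{n}-p+S,\qquad S:=\sum_{y\in\mathbb{F}_p^{*}}\sum_{x\in\mathbb{F}_p^{n}}\theta\bigl(y f(x)\bigr).
\]
Everything reduces to evaluating $S$.

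I would factor $\theta(yf(x))=\prod_{i}\theta(yx_i^{d})\cdot\theta\bigl(-d\lambda y\prod_i x_i\bigr)$ and split the inner sum by the vanishing pattern of the coordinates. On the boundary strata, where some $x_i=0$, the coupling factor equals $1$ and one is left with $\lambda$-independent Fermat-type sums $\sum\prod\theta(yx_i^{d})$ in the nonzero variables. On the interior stratum, where all $x_i\neq0$ (and assuming $\lambda\neq0$ so the coupling argument is nonzero), I would expand every factor by (\ref{for_AddtoGauss}), writing $\theta(yx_i^{d})=\tfrac{1}{p-1}\sum_{\chi_i}g(\bar\chi_i)\,\chi_i^{d}(x_i)\,\chi_i(y)$ and $\theta(-d\lambda y\prod x_i)=\tfrac{1}{p-1}\sum_{\psi}g(\bar\psi)\,\psi(-d\lambda)\,\psi(y)\prod_i\psi(x_i)$.

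Summing the interior term over $x_i\in\mathbb{F}_p^{*}$ and $y\in\mathbb{F}_p^{*}$ and using orthogonality (\ref{for_TOrthEl}) forces $\chi_i^{d}=\bar\psi$ for all $i$ and $\prod_i\chi_i=\bar\psi$. Parametrizing $\bar\chi_i=T^{\,j+w_i t}$ with $0\le w_i<d$ and $t=(p-1)/d$, the first relation gives $\bar\psi=T^{-dj}$, and (using $n=d$) the second becomes $\sum_i w_i\equiv0\pmod d$, i.e.\ $w\in W$. Hence the admissible tuples are indexed by $(j,w)\in\{0,\dots,p-2\}\times W$; this indexing is $d$-to-one with fibres exactly the $\sim$-orbits (all of size $d$), and the inner $j$-sum is invariant under the representative shift $j\mapsto j+st$, so summation over $(j,w)$ equals $d$ times summation over $\{0,\dots,p-2\}\times(W/\!\sim)$. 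For the generic indices, where $T^{dj}\neq\varepsilon$, the reflection formula (\ref{for_GaussConj}) in the form $g(T^{dj})g(T^{-dj})=T^{dj}(-1)p$ together with $T^{dj}(-d\lambda)=T^{dj}(-1)\,T^{dj}(d\lambda)$ rewrites each summand $g(T^{-dj})\prod_i g(T^{\,j+w_i t})\,T^{dj}(-d\lambda)$ as $p\cdot\frac{\prod_i g(T^{\,j+w_i t})}{g(T^{dj})}\,T^{dj}(d\lambda)$. After dividing by $p(p-1)$ this reproduces exactly the double sum $\frac{1}{p-1}\sum_{[w]\in W/\sim}\sum_{j=0}^{p-2}\frac{\prod_i g(T^{\,j+w_i t})}{g(T^{dj})}\,T^{dj}(d\lambda)$ of the theorem.

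The main obstacle is the bookkeeping of all the degenerate contributions that the clean computation above sets aside, and showing that they assemble into $\sum_{w\in W}N_p(0,w)$. These are: the interior indices with $T^{dj}=\varepsilon$, where the reflection step produces a spurious factor of $p$ that must be corrected; the boundary strata with some $x_i=0$; the excluded case $\lambda=0$; and the constant $p^{n}-p$. I would match these residual terms against the piecewise definition (\ref{Def_Np0}), identifying the all-zero weight's entry $\tfrac{p^{n-1}-1}{p-1}$, confirming that weights with some but not all $w_i=0$ contribute nothing, and recognizing the all-nonzero weights as $\tfrac1p\prod_i g(T^{w_i t})$, with Hasse--Davenport (\ref{thm_HD}) available to simplify the resulting Fermat Gauss-sum products. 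Verifying these cancellations and the orbit multiplicities in $W/\!\sim$ is the delicate part; the interior identity is otherwise a direct consequence of orthogonality and the Gauss-sum reflection formula.
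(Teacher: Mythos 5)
The paper never proves this theorem --- it is imported verbatim from Koblitz \cite{Ko4} --- so the only in-paper benchmark is the closing Remark, where the author notes that counting points directly via (\ref{for_CtgPts}) and (\ref{for_AddtoGauss}) is ``essentially the same'' method that Theorem \ref{thm_Koblitz} encapsulates, and indeed carries out exactly that computation for the $p \not\equiv 1 \pmod 5$ case of Theorem \ref{cor_GStoMod}. Your plan is precisely this character-sum method, i.e.\ essentially Koblitz's original argument, and your generic computation is correct in every detail I can check: the restriction to $n=d$ is the right reading of the statement (the projective count is only meaningful for homogeneous $f$, and the paper applies the theorem with $n=d=5$); orthogonality does force $\chi_i^d=\bar{\psi}$ for all $i$ and $\prod_i \chi_i = \bar{\psi}$; the $(j,w)$-parametrization is $d$-to-one with fibres exactly the $\sim$-orbits (note that closure of $W$ under $w \mapsto w+s(1,\dotsc,1)$ itself uses $n=d$, since $\sum (w_i+s) \equiv \sum w_i + sd \pmod d$); and the invariance of the $j$-sum under the simultaneous shift $j \mapsto j+st$, $w \mapsto w - s(1,\dotsc,1)$ is what legitimizes summing over $W/\sim$. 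The reflection step via (\ref{for_GaussConj}) then reproduces the theorem's double sum for the indices with $T^{dj} \neq \varepsilon$, as you say.

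What you defer is, candidly, the bulk of Koblitz's actual proof, but you have set it up correctly, and two checks confirm your matching targets. First, $\frac{p^n-p}{p(p-1)} = \frac{p^{n-1}-1}{p-1}$ is exactly the all-zero weight's entry in (\ref{Def_Np0}), as you anticipate. Second, with the paper's convention $\varepsilon(0)=0$, so that $g(\varepsilon)=-1$, the terms of the stated double sum with $t \mid j$ (i.e.\ $T^{dj}=\varepsilon$, $j=kt$) equal $-\tfrac{1}{p-1}\prod_i g(T^{(w_i+k)t})$, whereas the true interior contribution at those indices is $-\tfrac{1}{p(p-1)}\prod_i g(T^{(w_i+k)t})$; the discrepancy is $\tfrac{1}{p}\prod_i g(T^{w_i't})$ summed over $w' \in W$ (as $[w]$ and $k$ range, $w+k(1,\dotsc,1)$ sweeps out $W$ exactly once), which is precisely what the all-nonzero entries $\tfrac{1}{p}\prod_i g(T^{w_i t})$ of $N_p(0,w)$ supply --- leaving the boundary strata (some $x_i=0$) to cancel the residual terms from weights with some or all $w_i=0$, each zero coordinate contributing a factor $g(\varepsilon)=-1$. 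That cancellation is the classical Weil diagonal-hypersurface bookkeeping and does go through (Hasse--Davenport, which you invoke, is not actually needed for it; the paper only uses it later, in (\ref{for_HD})). So your proposal is a sound reconstruction by the same route as the original source, but as written it is a proof strategy rather than a complete proof: the boundary and $T^{dj}=\varepsilon$ accounting you postpone, together with the trivial $\lambda=0$ case (where $T^{dj}(d\lambda)=0$ for every $j$ under the paper's conventions), must be executed before the argument is finished.
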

\vspace{3pt}

\subsection{$p$-adic preliminaries}
We first define the Teichm\"{u}ller character to be the primitive character $\omega: \mathbb{F}_p \rightarrow\mathbb{Z}^{*}_p$ satisfying $\omega(x) \equiv x \pmod p$ for all $x \in \{0,1, \ldots, p-1\}$.
We now recall the $p$-adic gamma function. For further details, see \cite{Ko}.
Let $p$ be an odd prime.  For $n \in \mathbb{Z}^{+}$ we define the $p$-adic gamma function as
\begin{align*}
\gfp{n} &:= {(-1)}^n \prod_{\substack{0<j<n\\p \nmid j}} j \\
\intertext{and extend to all $x \in\mathbb{Z}_p$ by setting $\gfp{0}:=1$ and} 
\gfp{x} &:= \lim_{n \rightarrow x} \gfp{n}
\end{align*}
for $x\neq 0$, where $n$ runs through any sequence of positive integers $p$-adically approaching $x$. 
This limit exists, is independent of how $n$ approaches $x$, and determines a continuous function
on $\mathbb{Z}_p$ with values in $\mathbb{Z}^{*}_p$.
We now state a product formula for the $p$-adic gamma function.
If $m\in\mathbb{Z}^{+}$, $p \nmid m$ and $x=\frac{r}{p-1}$ with $0\leq r \leq p-1$ then
\begin{equation}\label{for_pGammaMult}
\prod_{h=0}^{m-1} \gfp{\tfrac{x+h}{m}}=\omega\left(m^{(1-x)(1-p)}\right)
\gfp{x} \prod_{h=1}^{m-1} \gfp{\tfrac{h}{m}}.
\end{equation}

\noindent The Gross-Koblitz formula \cite{GK} allows us to relate Gauss sums and the $p$-adic gamma function. Let $\pi \in \mathbb{C}_p$ be the fixed root of $x^{p-1}+p=0$ which satisfies ${\pi \equiv \zeta_p-1 \pmod{{(\zeta_p-1)}^2}}$. Then we have the following result.
\begin{theorem}[Gross, Koblitz \cite{GK}]\label{thm_GrossKoblitz}
For $0 \leq j \leq p-2$,
\begin{equation*} 
g(\bar{\omega}^j)=-\pi^j \: \gfp{\tfrac{j}{p-1}}.
\end{equation*}
\end{theorem}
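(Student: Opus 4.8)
The plan is to compute ${_4G}\bigl(\tfrac15,\tfrac25,\tfrac35,\tfrac45\bigr)_p$ exactly as a character sum of Gauss sums over $\mathbb{F}_p$, read that sum as (a piece of) the number of $\mathbb{F}_p$-rational points on the Dwork quintic threefold, strip away the geometrically ``trivial'' contributions using the character-sum evaluations of Section~2, and invoke the modularity of this Calabi--Yau to match the surviving term with $c(p)$.

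First I would reduce ${_4G}$ to Gauss sums. Applying the Gross--Koblitz formula (Theorem~\ref{thm_GrossKoblitz}) to each $p$-adic gamma factor in the defining sum~(\ref{for_GFn}) turns ${_4G}$ into a sum of products of Gauss sums $g(\bar\omega^{\,j})$, the product formula~(\ref{for_pGammaMult}) with $m=5$ being what disposes of the denominators $\gfp{\frac15}$ and, crucially, what lets the manipulation proceed for every $p\neq5$ rather than only $p\equiv1\pmod5$; the accumulated powers of $\pi$ collapse via $\pi^{p-1}=-p$. For $p\equiv1\pmod5$ this recovers ${_4G}=(-p)^3\,{_4F_3}(\chi_5,\chi_5^2,\chi_5^3,\chi_5^4;\varepsilon,\varepsilon,\varepsilon\mid1)_p$ of Proposition~\ref{prop_GtoGHS}, with $\chi_5$ of order $5$. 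Expanding the binomials as $\binom{A}{B}=\tfrac{B(-1)}{p}J(A,\bar B)$, converting to Gauss sums via~(\ref{for_JactoGauss}), and simplifying with the Hasse--Davenport relation (Theorem~\ref{thm_HD}), the evaluation $\prod_{i=1}^{4}g(\chi_5^i)=p^2$, and~(\ref{for_GaussConj}), I expect everything to collapse to
\[
{_4G}\bigl(\tfrac15,\tfrac25,\tfrac35,\tfrac45\bigr)_p=\frac{-1}{p-1}\sum_{\chi}\chi^5(5)\,\frac{g(\chi)^5}{g(\chi^5)},
\]
a sum which involves only $\chi^5$ and hence is meaningful, and should hold, for all odd $p\neq5$.

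Next I would read this sum geometrically. Its summand is precisely the $w=(0,0,0,0,0)$ orbit term of Koblitz's point-count formula (Theorem~\ref{thm_Koblitz}) applied to the quintic $X:\ x_1^5+\cdots+x_5^5-5\,x_1x_2x_3x_4x_5=0$ in $\mathbb{P}^4$ (the $\lambda=1$ fibre of the Dwork pencil, matching the hypergeometric argument $z=1$), so ${_4G}$ equals, up to sign, one orbit contribution to $N_p:=N_p(1)=\lvert X(\mathbb{F}_p)\rvert$. I would therefore compute $N_p$ by~(\ref{for_CtgPts}) (valid for all $p$) or by Theorem~\ref{thm_Koblitz} (for $p\equiv1\pmod5$) and isolate that orbit. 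The projective background contributes $\tfrac{p^4-1}{p-1}=1+p+p^2+p^3$ through the $N_p(0,w)$ terms of~(\ref{Def_Np0}); the contributions of the singular locus of $X$ and its resolution are products of four Gauss sums of exactly the shape evaluated by Lemma~\ref{lem_JacTsum} and Corollary~\ref{cor_GaussTsum}, hence reduce to multiples of $p(p-1)$ and, after the $\tfrac{1}{p-1}$ normalization, to explicit integer and $p$-power terms. A short reflection-formula computation ($g(\chi_5)g(\chi_5^4)=g(\chi_5^2)g(\chi_5^3)=p$ together with Gross--Koblitz forcing $\gfp{\frac15}\gfp{\frac45}=\gfp{\frac25}\gfp{\frac35}=-1$, so $s(p)=1$ for $p\equiv1\pmod5$) shows these background pieces absorb exactly the $s(p)\,p$ subtracted on the left.

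Finally I would identify the one surviving piece with $c(p)$. Splitting the $\chi$-sum according to whether $\chi^5=\varepsilon$ isolates the characters of order dividing $5$, whose quintic Gauss sums $g(\chi_5^k)^5$ span the distinguished two-dimensional Frobenius eigenspace; invoking the known modularity of a small resolution of $X$, namely that the trace of Frobenius on this part of $H^3$ is the $p$-th coefficient of the weight-four eta-product~(\ref{for_ModForm}), pins this term down as $\pm c(p)$. Assembling the background, node, and modular contributions then yields ${_4G}-s(p)\,p=c(p)$, and the prime $p=2$ is checked directly. I expect the principal obstacle to be twofold. First, making the reduction to the Gauss-sum form uniform for $p\not\equiv1\pmod5$, where no order-five character exists and Proposition~\ref{prop_GtoGHS}, Theorem~\ref{thm_Koblitz} and Hasse--Davenport are all unavailable, so that the steps above must be re-run through~(\ref{for_pGammaMult}) and~(\ref{for_CtgPts}) exploiting the bijectivity of $x\mapsto x^5$. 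Second, since the assertion is an exact equality and not a congruence, tracking every node, exceptional divisor and sign in the resolution so that the trivial terms cancel precisely against $s(p)\,p$ with no residual error; it is exactly here that the clean closed-form evaluations in Lemma~\ref{lem_JacTsum} and Corollary~\ref{cor_GaussTsum} do the essential work.
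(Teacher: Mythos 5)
Your proposal does not address the statement in question. The statement is Theorem~\ref{thm_GrossKoblitz}, the Gross--Koblitz formula $g(\bar{\omega}^j)=-\pi^j\,\gfp{\tfrac{j}{p-1}}$ relating Gauss sums to Morita's $p$-adic gamma function. In the paper this is a quoted classical result, cited to \cite{GK} and given no proof at all; it is an ingredient, not a consequence, of the paper's arguments. What you have written instead is a sketch of the proof of Theorem~\ref{cor_GStoMod} --- the evaluation of ${_4G}\bigl(\tfrac15,\tfrac25,\tfrac35,\tfrac45\bigr)_p$ via point counts on the Dwork quintic and the modularity input from \cite{Me}. That sketch is broadly aligned with Section~3 of the paper, but it is a proof of a different theorem.

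The gap is moreover structural, not just a mislabeling: your very first step is ``applying the Gross--Koblitz formula (Theorem~\ref{thm_GrossKoblitz}) to each $p$-adic gamma factor,'' so as an argument for Theorem~\ref{thm_GrossKoblitz} itself the proposal is circular. No amount of the finite-field character manipulation you describe (Hasse--Davenport, Jacobi-sum reductions, Koblitz's point-count formula) can yield Gross--Koblitz, because those identities live entirely on the Gauss-sum side and never see the $p$-adic interpolation encoded in $\gfp{\cdot}$ or the normalization $\pi^{p-1}=-p$ with $\pi\equiv\zeta_p-1\pmod{(\zeta_p-1)^2}$. An actual proof requires genuinely $p$-adic machinery --- for instance Dwork's splitting function and $p$-adic cohomology as in the original paper \cite{GK}, or Stickelberger-type congruences for Gauss sums combined with the continuity and functional equations of $\Gamma_p$ --- none of which appears in, or follows from, the tools of Section~2. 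If the intended target was Theorem~\ref{cor_GStoMod}, the sketch is on the right track (it matches the paper's two-case strategy, using Theorem~\ref{thm_Koblitz} for $p\equiv1\pmod5$ and (\ref{for_CtgPts}) with the bijectivity of $x\mapsto x^5$ otherwise), though note the paper takes $c(p)$ in terms of $N_p$ directly from \cite{Me} rather than re-deriving modularity of a small resolution; but for the statement actually posed, a proof is entirely missing.
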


\vspace{3pt}

\section{Proofs}
\begin{proof}[Proof of Theorem \ref{cor_GStoMod}]
Let $N_p$ be the number of points in $\mathbb{P}^4(\mathbb{F}_p)$ on $x_1^5+x_2^5+x_3^5+x_4^5+ x_5^5-5 x_1 x_2 x_3 x_4 x_5=0$. Then from \cite[page 32]{Me} (following the work of Schoen \cite{S}), we have
\begin{equation*}\label{for_ModtoPts}
c(p)=
\begin{cases}
p^3 + 25 p^2 -100 p + 1 - N_p & \text{if $p \equiv 1\phantom{,2} \pmod {5}$},\\
p^3 + \phantom{25} p^2 \: \;\phantom{-100 p} + 1 - N_p & \text{if $p \equiv 4\phantom{,2} \pmod {5}$}, \\
p^3 + \phantom{25} p^2 + \phantom{10}2 p + 1 - N_p & \text{if $p \equiv 2,3 \pmod {5}$}.
\end{cases}
\end{equation*}
Therefore, noting that
\begin{equation*}\label{for_Sp}
s(p)=
\begin{cases}
+1 & \text{if } p\equiv 1, 4 \pmod 5,\\
-1 & \text{if } p\equiv 2,3 \pmod 5,
\end{cases}
\end{equation*}
it suffices to prove
\begin{equation}\label{N_to_G}
N_p = - {_{4}G} \left(\tfrac{1}{5} ,\hspace{2pt} \tfrac{2}{5},\hspace{2pt} \tfrac{3}{5} ,\hspace{2pt} \tfrac{4}{5}\right)_p +
\begin{cases}
p^3 + 25 p^2 -99 p + 1 & \text{if $p \equiv 1 \pmod {5}$},\\
p^3 + \phantom{25} p^2 + \phantom{99} p + 1 & \text{if $p \not\equiv 1 \pmod {5}$}.
\end{cases}
\end{equation}
We will express both sides of (\ref{N_to_G}) in terms of Gauss sums, starting with ${_{4}G} \left(\tfrac{1}{5} ,\hspace{2pt} \tfrac{2}{5},\hspace{2pt} \tfrac{3}{5} ,\hspace{2pt} \tfrac{4}{5}\right)_p$.

Let $m_0:=-1$, $m_{5}:=p-2$, $d_0=d_{5}:=p-1$ and $m_i:=i$, $d_i:=5$ for $1\leq i \leq4$. Also, let $r_i=\frac{p-1}{d_i}$ for $0 \leq i \leq 5$. Consequently, from definition (\ref{for_GFn}) we get that 
\begin{multline*}
{_{4}G} \left(\tfrac{1}{5} ,\hspace{2pt} \tfrac{2}{5},\hspace{2pt} \tfrac{3}{5} ,\hspace{2pt} \tfrac{4}{5}\right)_p\\
= \frac{-1}{p-1} \sum_{k=0}^{4} (-p)^k \sum_{j=\left\lfloor m_k r_k \right\rfloor +1}^{\left\lfloor m_{k+1}r_{k+1}\right\rfloor} 
\frac{{\biggfp{\tfrac{j}{p-1}}}^{4}}{\biggfp{1-\tfrac{j}{p-1}}}
\frac
{\prod_{h=0}^{4} \Biggfp{\frac{k+1-\frac{5j}{p-1}+h}{5}}}
{\prod_{h=1}^{4} \biggfp{\frac{h}{5}}}.
\end{multline*}
For a given $k$, we easily see that $k\bigl(\frac{p-1}{5}\bigr) \leq j \leq (k+1)\bigl(\frac{p-1}{5}\bigr)$, with equality on the left when $k=j=0$. Therefore $0 \leq (k+1)(p-1) -5j \leq p-1$ and we can apply (\ref{for_pGammaMult}) with $m=5$ and $x= k+1-\frac{5j}{p-1}$ to get
\begin{multline*}
{_{4}G} \left(\tfrac{1}{5} ,\hspace{2pt} \tfrac{2}{5},\hspace{2pt} \tfrac{3}{5} ,\hspace{2pt} \tfrac{4}{5}\right)_p\\
= \frac{-1}{p-1} \sum_{k=0}^{4} (-p)^k \sum_{j=\left\lfloor m_k r_k \right\rfloor +1}^{\left\lfloor m_{k+1}r_{k+1}\right\rfloor} 
\frac{{\biggfp{\tfrac{j}{p-1}}}^{4}}{\biggfp{1-\tfrac{j}{p-1}}}
\biggfp{k+1-\tfrac{5j}{p-1}}
\omega(5^{-5j}).
\end{multline*}
We now use Theorem \ref{thm_GrossKoblitz} to convert this to an expression involving Gauss sums. To satisfy the conditions of the theorem for all arguments of the $p$-adic gamma functions above, we split off the term when $j=0$. For all other values of $j$, we have $0 \leq (p-1)- j, (k+1)(p-1)-5j \leq p-2$. We then get
\begin{equation*}
{_{4}G} \left(\tfrac{1}{5} ,\hspace{2pt} \tfrac{2}{5},\hspace{2pt} \tfrac{3}{5} ,\hspace{2pt} \tfrac{4}{5}\right)_p
= \frac{-1}{p-1} \left[1+ \sum_{j=1}^{p-2} \frac{g(\omega^{-j})^4}{g(\omega^j)} \hspace{2pt} g(\omega^{5j})\hspace{2pt}  \omega(5^{-5j}) \right].
\end{equation*}
Now applying (\ref{for_GaussConj}) yields
\begin{equation*}
{_{4}G} \left(\tfrac{1}{5} ,\hspace{2pt} \tfrac{2}{5},\hspace{2pt} \tfrac{3}{5} ,\hspace{2pt} \tfrac{4}{5}\right)_p
= -\frac{1}{p-1} \left[1+\frac{1}{p} \sum_{j=1}^{p-2}\; g(\omega^{-j})^{5}\, g(\omega^{5j}) \; \omega^{-5j}(-5) \right].
\end{equation*}

We will now evaluate $N_p$. We first consider the case when $p \equiv 1 \pmod 5$. By Theorem \ref{thm_Koblitz} with $d=5, \lambda=1$ we see that
\begin{equation}\label{for_1mod5}
N_p = \sum_{w \in W} N_p(0,w) + \frac{1}{p-1} \sum_{[w] \in W/ \sim} \sum_{j=0}^{p-2} 
\frac{\prod_{i=1}^{5} g(T^{j+w_i t})}{g(T^{5j})}\; T^{5j}(5),
\end{equation}
where $W$, $\sim$ and $N_p(0,w)$ are as defined in (\ref{Def_W}), (\ref{Def_tilde}) and (\ref{Def_Np0}) respectively, $T$ is a fixed generator for the group of characters of $\mathbb{F}_p$ and $t=\frac{p-1}{5}$. We now describe the elements of both $W$ and $W/ \sim$ as they apply to our setting. We first note that the contribution of any $(w_1, w_2, \ldots, w_5)$ to the above formula is the same as any permutation of it. We therefore list the elements of these sets up to permutation. We will however indicate, using a superscript, the multiplicity with which it contributes to the formula, i.e., its total number of distinct permutations. As $N_p(0,w)$ vanishes if some but not all of the $w_i$ are zero we will just list the elements of $W$ for which all $w_i$ are non-zero and call it $W^*$. Therefore
\begin{align*}
W^* = \{&(1,1,1,1,1)^1, (2,2,2,2,2)^1, (3,3,3,3,3)^1, (4,4,4,4,4)^1,\\
& (1,1,1,3,4)^{20}, (1,2,2,2,3)^{20}, (2,3,3,3,4)^{20}, (1,2,4,4,4)^{20},\\
& (1,1,2,2,4)^{30}, (2,2,3,3,4)^{30},(1,1,2,3,3)^{30}, (1,3,3,4,4)^{30} \} 
\end{align*}
and
\begin{multline*}
W/\sim \; = \{(0,0,0,0,0)^1, (0,1,2,3,4)^{24}, (0,0,0,1,4)^{20},\\ (0,0,0,2,3)^{20}, (0,0,1,1,3)^{30}, (0,0,1,2,2)^{30} \}.
\end{multline*}
We now use $W^*$ to evaluate $N_p(0):=\sum_{w \in W} N_p(0,w)$. We note that many of the elements of $W^*$ are multiples modulo $5$ of each other. Thus
\begin{multline*}
N_p(0) = \frac{p^4 - 1}{p-1} + \frac{20}{p} \sum_{i=1}^{4} g(T^{it})^3 \; g(T^{3it}) \; g(T^{4it})\\ + \frac{30}{p} \sum_{i=1}^{4} g(T^{it})^2 \; g(T^{2it})^2 \; g(T^{4it}) + \frac{1}{p} \sum_{i=1}^{4} g(T^{it})^5.
\end{multline*}
Applying (\ref{for_GaussConj}) then yields
\begin{align}\label{for_Np0}
N_p(0)
&= p^3 + p^2 + p + 1 + 50 \sum_{i=1}^{4} g(T^{it})^2 \; g(T^{3it}) + \frac{1}{p} \sum_{i=1}^{4} g(T^{it})^5.
\end{align}
We now focus on the second sum on the right-hand side of (\ref{for_1mod5}) and evaluate it for each individual $[w] \in W/\sim$ (up to permutation). We will denote each such minor sum as $S_{[w]}$. We start with $[w]= (0,0,0,0,0).$ So
\begin{equation*}
S_{(0,0,0,0,0)} = \frac{1}{p-1} \sum_{j=0}^{p-2} \frac{g(T^{j})^5}{g(T^{5j})} \; T^{5j}(5).
\end{equation*}
We isolate the terms where $j$ is a multiple of $t= \frac{p-1}{5}$ and apply (\ref{for_GaussConj}) to the other terms to get
\begin{equation}\label{for_S00000}
S_{(0,0,0,0,0)} 
= \frac{1}{p-1} \left [1 - \sum_{\substack{j=1\\t \mid j}}^{p-2} g(T^{j})^5 + \frac{1}{p} \sum_{\substack{j=1\\t \nmid j}}^{p-2} g(T^{j})^5 \; g(T^{-5j})\; T^{5j}(-5)
 \right].
\end{equation}
Next considering $[w]=(0,1,2,3,4)$ yields
\begin{equation*}
S_{(0,1,2,3,4)} 
=\frac{24}{p-1} \sum_{j=0}^{p-2} \frac{g(T^j) \; g(T^{j+t}) \; g(T^{j+2t}) \; g(T^{j+3t}) \; g(T^{j+4t})}{g(T^{5j})} \; T^{5j}(5).
\end{equation*}
Combining Theorem \ref{thm_HD}, with $m=5$ and $\psi=T^j$, and (\ref{for_GaussConj}) we see that
\begin{equation}\label{for_HD}
g(T^j) \; g(T^{j+t})\; g(T^{j+2t})\; g(T^{j+3t})\; g(T^{j+4t}) = g(T^{5j})\; T^{-5j}(5)\; p^2
\end{equation}
Therefore, we have that
\begin{equation}\label{for_S01234}
S_{(0,1,2,3,4)} 
 = 24 \, p^2.
\end{equation}
The sums for the remaining elements, $(0,0,0,1,4)$, $(0,0,0,2,3)$, $(0,0,1,1,3)$  and $(0,0,1,2,2)$, can all be evaluated in a similar manner to each other. By definition 
\begin{equation*}
S_{(0,0,0,1,4)} 
=\frac{20}{p-1} \sum_{j=0}^{p-2} \frac{g(T^j)^3 \; g(T^{j+t}) \; g(T^{j+4t})}{g(T^{5j})} \; T^{5j}(5).
\end{equation*}
Applying (\ref{for_HD}) gives us
\begin{equation*}
S_{(0,0,0,1,4)} 
=\frac{20 \, p^2}{p-1} \sum_{j=0}^{p-2} \frac{g(T^j)^2}{g(T^{j+2t})\; g(T^{j+3t})}.
\end{equation*}
We now use (\ref{for_GaussConj}) to clear denominators, being careful to deal with the cases $j=2t,3t$ separately. Thus 
\begin{multline*}
S_{(0,0,0,1,4)} 
=\frac{20}{p-1} \left[\; \sum_{\substack{j=0\\j \neq 2t, 3t}}^{p-2} g(T^j)^2 \; g(T^{-j+3t})\; g(T^{-j+2t})
\right. \\ \left.
 \phantom{\sum_{\substack{j=0\\j \neq 2t, 3t}}^{p-2}} - p \, g(T^{2t})^2 \; g(T^t) - p \, g(T^{3t})^2 \; g(T^{4t}) \right].
\end{multline*}
We now apply Corollary \ref{cor_GaussTsum} with $a=3$, $b=2$ and $c=0$ to get
\begin{equation}\label{for_S00014}
S_{(0,0,0,1,4)}
=-20p - 20 \left[g(T^{2t})^2 \; g(T^t) + g(T^{3t})^2 \; g(T^{4t}) \right].
\end{equation}

\newpage
\noindent Similarly, we have 
\begin{equation}\label{for_S00023}
S_{(0,0,0,2,3)}=-20p - 20 \left[g(T^{t})^2 \; g(T^{3t}) + g(T^{4t})^2 \; g(T^{2t}) \right],
\end{equation}
\begin{equation}\label{for_S00113}
S_{(0,0,1,1,3)}=-30p - 30 \left[g(T^{3t})^2 \; g(T^{4t}) + g(T^{2t})^2 \; g(T^{t}) \right]
\end{equation}
and\\[-6pt]
\begin{equation}\label{for_S00122}
S_{(0,0,1,2,2)}=-30p - 30 \left[g(T^{t})^2 \; g(T^{3t}) + g(T^{4t})^2 \; g(T^{2t}) \right].\\[9pt]
\end{equation}

\noindent Combining (\ref{for_Np0}), (\ref{for_S00000}), (\ref{for_S01234}), (\ref{for_S00014}), (\ref{for_S00023}), (\ref{for_S00113}) and (\ref{for_S00122}) we get via (\ref{for_1mod5}) that
\begin{equation*}
N_p = p^3 + 25 p^2 -99 p + 1 + \frac{1}{p-1} \left [1 + \frac{1}{p} \sum_{j=1}^{p-2} g(T^{j})^5 \; g(T^{-5j})\; T^{5j}(-5)
 \right].
\end{equation*}
Taking $T$ to be $\omega^{-j}$ gives us
$$N_p = p^3 + 25 p^2 -99 p + 1 - {_{4}G} \left(\tfrac{1}{5} ,\hspace{2pt} \tfrac{2}{5},\hspace{2pt} \tfrac{3}{5} ,\hspace{2pt} \tfrac{4}{5}\right)_p$$
as required.

We now evaluate $N_p$ when $p \not\equiv 1 \pmod 5$, using (\ref{for_CtgPts}), and express our results in terms of Gauss sums using (\ref{for_AddtoGauss}). Let $\bar{x}$ denote the tuple $(x_1, x_2, x_3, x_4, x_5)$ and define $f(\bar{x}) := x_1^5+x_2^5+x_3^5+x_4^5+ x_5^5-5 x_1 x_2 x_3 x_4 x_5$ for brevity. Also, let $N_p^A$ denote the number of points in $\mathbb{A}^5(\mathbb{F}_p)$ on $f(\bar{x})=0.$ Then, as $f$ is homogeneous,
\begin{equation}\label{for_Np}
N_p = \frac{N_p^A-1}{p-1}.
\end{equation}
From (\ref{for_CtgPts}) we see that
\begin{equation*}
p N_p^A 
= p^5 + \sum_{y \in \mathbb{F}_p^*} \sum_{\substack{x_i \in \mathbb{F}_p\\some \; x_i=0}} \theta(y \: f(\bar{x})) + \sum_{y \in \mathbb{F}_p^*} \sum_{x_i \in \mathbb{F}_p^{*}} \theta(y \: f(\bar{x})).
\end{equation*}
We now consider the number of points $N_p^{\prime}$ in $\mathbb{A}^5(\mathbb{F}_p)$ on $f^{\prime}(\bar{x}):=x_1^5+x_2^5+x_3^5+x_4^5+ x_5^5=0.$ As $x \rightarrow x^5$ is an automorphism on $\mathbb{F}_p$ when $p \not\equiv 1 \pmod 5$, it is easy to see that $N_p^{\prime}=p^4$. From (\ref{for_CtgPts}) we also have that
\begin{align*}
p N_p^{\prime} 
&= p^5 + \sum_{y \in \mathbb{F}_p^*} \sum_{\substack{x_i \in \mathbb{F}_p\\some \; x_i=0}} \theta(y \: f^{\prime}(\bar{x})) + \sum_{y \in \mathbb{F}_p^*} \sum_{x_i \in \mathbb{F}_p^{*}} \theta(y \: f^{\prime}(\bar{x})).
\end{align*}
Therefore
$$\sum_{y \in \mathbb{F}_p^*} \sum_{\substack{x_i \in \mathbb{F}_p\\some \; x_i=0}} \theta(y \: f^{\prime}(\bar{x})) =-  \sum_{y \in \mathbb{F}_p^*} \sum_{x_i \in \mathbb{F}_p^{*}} \theta(y \: f^{\prime}(\bar{x})).$$
Noting that
$$\sum_{y \in \mathbb{F}_p^*} \sum_{\substack{x_i \in \mathbb{F}_p\\some \; x_i=0}} \theta(y \: f(\bar{x})) = 
\sum_{y \in \mathbb{F}_p^*} \sum_{\substack{x_i \in \mathbb{F}_p\\some \; x_i=0}} \theta(y \: f^{\prime}(\bar{x}))$$
yields
\begin{equation}\label{for_NpA}
p N_p^A = p^5 + \sum_{y \in \mathbb{F}_p^*} \sum_{x_i \in \mathbb{F}_p^{*}} \theta(y \: f(\bar{x})) 
-  \sum_{y \in \mathbb{F}_p^*} \sum_{x_i \in \mathbb{F}_p^{*}} \theta(y \: f^{\prime}(\bar{x})).
\end{equation}
We now convert the two sums on the right above, which we call $S_1$ and $S_2$ respectively, to expressions involving Gauss sums using (\ref{for_AddtoGauss}). Then, starting  with $S_2$, we have
\begin{align*}
S_2
&= \sum_{y, x_i \in \mathbb{F}_p^{*}} \theta(y \: x_1^5) \theta(y \: x_2^5)\theta(y \: x_3^5)\theta(y \: x_4^5)\theta(y \: x_5^5)\\
&= \frac{1}{(p-1)^5} \sum_{a,b,c,d,e = 0}^{p-2} \sum_{y, x_i \in \mathbb{F}_p^{*}}  g(T^{-a}) \; g(T^{-b}) \; g(T^{-c}) \; g(T^{-d}) \; g(T^{-e}) \; \\
&\qquad \qquad \qquad \qquad\;  \qquad \qquad
\cdot T^a(y x_1^5)\; T^b(y x_2^5)\; T^c(y x_3^5)\; T^d(y x_4^5)\; T^e(y x_5^5)\\
&=\frac{1}{(p-1)^5} \sum_{a,b,c,d,e = 0}^{p-2} \sum_{x_i \in \mathbb{F}_p^{*}}  g(T^{-a}) \; g(T^{-b}) \; g(T^{-c}) \; g(T^{-d}) \; g(T^{-e}) \; \\
& \qquad \qquad  \; \qquad
\cdot T^a(x_1^5)\;  T^b(x_2^5)\;  T^c(x_3^5)\; T^d(x_4^5)\;  T^e(x_5^5) \sum_{y \in \mathbb{F}_p^{*}} T^{a+b+c+d+e}(y).
\end{align*}
We now apply (\ref{for_TOrthEl}) to the last summation on the right, which yields $(p-1)$ if $e=-a-b-c-d$ and zero otherwise.
So  
\begin{align*}
S_2
&=\frac{1}{(p-1)^4} \sum_{a,b,c,d = 0}^{p-2} \sum_{x_i \in \mathbb{F}_p^{*}}  g(T^{-a}) \; g(T^{-b}) \; g(T^{-c}) \; g(T^{-d}) \; g(T^{a+b+c+d}) \; \\
&\qquad \qquad \qquad \quad \qquad \qquad
\cdot T^a(x_1^5)\; T^b(x_2^5)\; T^c(x_3^5)\; T^d(x_4^5)\; T^{-a-b-c-d}(x_5^5)\\
&=\frac{1}{(p-1)^4} \sum_{a,b,c,d = 0}^{p-2} \sum_{x_2, x_3, x_4, x_5 \in \mathbb{F}_p^{*}}  g(T^{-a}) \, g(T^{-b}) \, g(T^{-c}) \, g(T^{-d}) \, g(T^{a+b+c+d}) \; \\
&\qquad \qquad  \qquad \qquad
\cdot    T^b(x_2^5)\; T^c(x_3^5)\; T^d(x_4^5)\; T^{-a-b-c-d}(x_5^5)\sum_{x_1 \in \mathbb{F}_p^{*}} T^{5a}(x_1)\; .
\end{align*}
We again apply (\ref{for_TOrthEl}) to the last summation on the right
and continue in this manner isolating the sum for each $x_i$ in turn and applying (\ref{for_TOrthEl}). This leads to 
\begin{equation}\label{for_S2}
S_2
=- \sum_{x_5 \in \mathbb{F}_p^{*}} 1 = -(p-1). 
\end{equation}
A similar evaluation of $S_1$ using (\ref{for_AddtoGauss}) and (\ref{for_TOrthEl}) yields
\begin{equation}\label{for_S1}
S_1 = \sum_{e=0}^{p-2} g(T^{-e})^5 \; g(T^{5e}) T^{-5e}(-5). 
\end{equation}
Accounting for (\ref{for_S2}), (\ref{for_S1}) in (\ref{for_NpA}) we see that, via (\ref{for_Np}),
\begin{align*}
N_p 
&= \frac{1}{p-1} \left\{ \frac{1}{p} \left[ p^5 + \sum_{e=0}^{p-2} g(T^{-e})^5 \; g(T^{5e}) T^{-5e}(-5) +p -1 \right] - 1 \right\}\\
&= \frac{p^4-1}{p-1} + \frac{1}{p-1}\left[1 + \frac{1}{p} \sum_{e=1}^{p-2} g(T^{-e})^5 \; g(T^{5e}) T^{-5e}(-5) \right].
\end{align*}

\newpage
\noindent Taking $T$ to be $\omega^{j}$ gives us
$$N_p = p^3 + p^2 + p + 1 - {_{4}G} \left(\tfrac{1}{5} ,\hspace{2pt} \tfrac{2}{5},\hspace{2pt} \tfrac{3}{5} ,\hspace{2pt} \tfrac{4}{5}\right)_p$$
as required.
\end{proof}

\vspace{12pt}

\begin{proof}[Proof of Theorem \ref{thm_DMCMain}]
One easily checks the result for $p=2$. Combining Theorem \ref{thm_4G2} with $d=5$ and Theorem \ref{cor_GStoMod} yields the result.
\end{proof}

\section{Remark}
Using (\ref{for_CtgPts}) to count points on certain algebraic varieties is by no means new. This author first observed the technique in \cite{F2}. In the proof above, we have applied this technique in the case $p \not\equiv 1 \pmod 5$. We could have also applied this method to the case $p \equiv 1 \pmod 5$, choosing instead to use Theorem \ref{thm_Koblitz} for reasons of brevity. The two methods are essentially the same with Theorem \ref{thm_Koblitz} encapsulating much of the work which must be done if (\ref{for_CtgPts}) is used. For a detailed account of how the case $p \equiv 1 \pmod 5$ is established using (\ref{for_CtgPts}), please see \cite{DMC}.

\vspace{12pt}

\end{document}